\documentclass[reqno]{amsart}
\usepackage{amsmath, amsthm, amssymb, amstext}

\usepackage{hyperref,xcolor}
\hypersetup{pdfborder={0 0 0},colorlinks}
\usepackage{enumitem}
\setlength{\parindent}{1.2em}
\allowdisplaybreaks
\raggedbottom
\usepackage{todonotes}

\newtheorem{theorem}{Theorem}
\newtheorem{remark}[theorem]{Remark}
\newtheorem{lemma}[theorem]{Lemma}
\newtheorem{proposition}[theorem]{Proposition}

\newtheorem{definition}[theorem]{Definition}


\newcommand{\N}{\mathbb{N}}
\newcommand{\R}{\mathbb{R}}
\newcommand{\Ne}{\mathcal{N}}
\newcommand{\dx}{\, {\rm d} x}

\newcommand{\dt}{\, {\rm d} t}

\newcommand{\eps}{\varepsilon}
\renewcommand{\L}{\mathcal{L}}
\renewcommand{\H}{\mathcal{H}}

\numberwithin{theorem}{section}
\numberwithin{equation}{section}

\title[Degenerate singular Kirchhoff problems in Musielak-Orlicz spaces]{Degenerate singular Kirchhoff problems \\ in Musielak-Orlicz spaces}

\author[U. Guarnotta]{Umberto Guarnotta}
\address[U. Guarnotta]{Dipartimento di Ingegneria Industriale e Scienze Matematiche, Università Politecnica delle Marche, Via Brecce Bianche 12, 60131 Ancona, Italy}
\email{u.guarnotta@univpm.it}

\author[P. Winkert]{Patrick Winkert}
\address[P. Winkert]{Technische Universit\"{a}t Berlin, Institut f\"{u}r Mathematik, Stra\ss{}e des 17.\,Juni 136, 10623 Berlin, Germany}
\email{winkert@math.tu-berlin.de}

\subjclass{35J15, 35J62, 35J75}
\keywords{Fibering method, generalized $N$-function, Musielak-Orlicz Sobolev spaces, Nehari manifold, singular term, super-linear nonlinearity, unbalanced-growth operator}

\begin{document}

\begin{abstract}
	In this paper we study quasilinear elliptic Kirchhoff equations driven by a non-homogeneous operator with unbalanced growth and right-hand sides that consist of sub-linear, possibly singular, and super-linear reaction terms. Under very general assumptions we prove the existence of at least two solutions for such problems by using the fibering method along with an appropriate splitting of the associated Nehari manifold. In contrast to other works our treatment is very general, with much easier and shorter proofs as it was done in the literature before. Furthermore, the results presented in this paper cover a large class of second-order differential operators like the $p$-Laplacian, the $(p,q)$-Laplacian, the double phase operator, and the logarithmic double phase operator.
\end{abstract}

\maketitle

\section{Introduction}

Given a bounded domain $\Omega\subseteq\R^N$, $N\geq 2$, with Lipschitz boundary $\partial\Omega$, this paper deals with general Kirchhoff problems involving singular and super-linear reaction terms of the form
\begin{equation}\label{prob}\tag{${\rm P}_\lambda$}
	\left\{
		\begin{aligned}
			-m\left(\int_\Omega \H(x,|\nabla u|)\dx\right) \L(u) &= \lambda f(u) + g(u) \quad &&\text{in } \Omega, \\
			u &>0 \quad &&\text{in } \Omega, \\[1ex]
			u &=0 \quad &&\text{on } \partial\Omega,
		\end{aligned}
	\right.
\end{equation}
where $\lambda>0$ is a parameter, $f,g\colon (0,+\infty)\to(0,+\infty)$ are continuously differentiable functions, $m\colon [0,+\infty)\to[0,+\infty)$ is the so-called Kirchhoff function, $\H\colon \Omega\times[0,+\infty)\to[0,+\infty)$ is a generalized $N$-function, while $\L\colon W^{1,\H}_0(\Omega)\to W^{1,\H}_0(\Omega)^*$ is an operator (possibly non-homogeneous and with unbalanced growth) satisfying certain structure conditions. To be more precise, we suppose the following assumptions on the data of problem \eqref{prob}:
\begin{enumerate}[label=\textnormal{(H)},ref=\textnormal{H}]
	\item\label{H}
\begin{enumerate}[label=\textnormal{(H$_{m}$)},ref=\textnormal{H$_{m}$},leftmargin=1cm]
	\item \label{hypm}
		The function $m\colon [0,+\infty)\to[0,+\infty)$ is continuously differentiable, non-decreasing, and $m(s)>0$ for all $s>0$. In particular,
		\begin{align*}
			\eta:= \sup_{s>0} \frac{sm'(s)}{m(s)} \geq 0.
		\end{align*}
\end{enumerate}

\begin{enumerate}[label=\textnormal{(H$_{\L}$)},ref=\textnormal{H$_{\L}$},leftmargin=1cm]
	\item \label{hypL}
		The function $\H\colon \Omega\times[0,+\infty)\to[0,+\infty)$ is a generalized $N$-function such that $\H(x,\cdot)\in C^2(0,+\infty)$ for a.a.\,$x\in\Omega$ and
		\begin{enumerate}[label=\textnormal{(\roman*)},ref=\textnormal{\roman*},leftmargin=1cm,topsep=0.2cm,itemsep=0.2cm]
			\item \label{hypL1}
				$\displaystyle p:=\inf_{(x,s)\in\Omega\times(0,+\infty)}  \frac{s\partial_s \H(x,s)}{\H(x,s)}>1$;
			\item 
				$\displaystyle q:=\sup_{(x,s)\in\Omega\times(0,+\infty)}  \frac{s\partial_s \H(x,s)}{\H(x,s)}<p^*$;
			\item \label{hypL3}
				$\displaystyle l_-:=\inf_{(x,s)\in\Omega\times(0,+\infty)}  \frac{s\partial^2_{ss} \H(x,s)}{\partial_s\H(x,s)}>0$;
			\item 
				$\displaystyle l_+:=\sup_{(x,s)\in\Omega\times(0,+\infty)} \frac{s\partial^2_{ss} \H(x,s)}{\partial_s\H(x,s)}<+\infty$,
		\end{enumerate}
		with the Sobolev conjugate $p^*$ of $p$. Moreover, $\L\colon  W^{1,\H}_0(\Omega)\to W^{1,\H}_0(\Omega)^*$ is defined as
		\begin{align}\label{operator}
			\L(u) := \operatorname{div}\left(\partial_s H(x,|\nabla u|) \frac{\nabla u}{|\nabla u|}\right).
		\end{align}
		In addition, we suppose that
		\begin{align}\label{compemb}
			W^{1,\H}(\Omega)\hookrightarrow L^\H(\Omega) \quad \text{compactly}.
		\end{align}
\end{enumerate}

\begin{enumerate}[label=\textnormal{(H$_{f}$)},ref=\textnormal{H$_{f}$},leftmargin=1cm]
	\item \label{hypf}
		The function $f\colon (0,+\infty)\to(0,+\infty)$ is continuously differentiable and satisfies
		\begin{align*}
			&\liminf_{s\to 0^+} f(s) \in (0,+\infty],\\
			&\gamma_- := -\sup_{s>0} \frac{sf'(s)}{f(s)} > 1-p,\\
			&\gamma_+ := -\inf_{s>0} \frac{sf'(s)}{f(s)} < 1.
		\end{align*}
\end{enumerate}

\begin{enumerate}[label=\textnormal{(H$_{g}$)},ref=\textnormal{H$_{g}$},leftmargin=1cm]
	\item \label{hypg}
		The function $g\colon (0,+\infty)\to(0,+\infty)$ is continuously differentiable and satisfies
		\begin{align*}
			&r_- := 1 + \inf_{s>0} \frac{sg'(s)}{g(s)} > 1,\\
			&r_+ := 1 + \sup_{s>0} \frac{sg'(s)}{g(s)} < p^*.
		\end{align*}
\end{enumerate}

\begin{enumerate}[label=\textnormal{(H$_{C}$)},ref=\textnormal{H$_{C}$},leftmargin=1cm]
	\item \label{hypind}
		The following condition holds true:
		\begin{align*}
			q\eta + l_+ < r_--1,
		\end{align*}
		where $q,l_+$ are defined in \eqref{hypL}, $\eta$ is given in \eqref{hypm}, and $r_-$ comes from \eqref{hypg}.
\end{enumerate}
\end{enumerate}

The following conclusions can be made from hypotheses \eqref{H}:
\begin{itemize}
	\item
		the condition $0<l_-\leq l_+<+\infty$ in \eqref{hypL} makes $\L$ a uniformly elliptic operator;
	\item
		\eqref{hypf} ensures that $f$ is sub-linear, possibly singular;
	\item
		\eqref{hypg} guarantees that $g$ is sub-critical;
	\item
		\eqref{hypind} is a super-linearity condition on $g$.
\end{itemize}

First, we mention that hypotheses \eqref{H} include the standard Kirchhoff function $m(s)=a+bs^\eta$ with $a,b\in\R^2\setminus\{(0,0)\}$, that means we allow degenerate Kirchhoff problems which create the most interesting models in applications. The following operators are included in hypotheses \eqref{H}, whereby we suppose in all cases that $1<p<N$, $p<q$, and $0 \leq \mu(\cdot) \in L^\infty(\Omega)$, while we assume $q<p^*:=\frac{Np}{N-p}$ for (i) and $q+\kappa<p^*$ for (ii)--(iii), where $\kappa:=\frac{e}{e+t_0}\in(0,1)$ and $t_0>0$ is the unique solution of $t=e\log(e+t)$:
\begin{enumerate}[leftmargin=0.7cm]
	\item[\textnormal{(i)}]
		Double phase operator:
		\begin{align*}
			\operatorname{div}\left(|\nabla u|^{p-2}\nabla u+\mu(x)|\nabla u|^{q-2}\nabla u\right)
		\end{align*}
		generated by the generalized $N$-function
		\begin{align}\label{N-function-double-phase}
			\H(x,s)=s^p+\mu(x)s^q  \quad\text{for all }(x,s)\in\Omega\times [0,\infty),
		\end{align}
		see Crespo-Blanco--Gasi\'{n}ski--Harjulehto--Winkert \cite{Crespo-Blanco-Gasinski-Harjulehto-Winkert-2022};
	\item[\textnormal{(ii)}]
		Logarithmic double phase operator:
		\begin{align}\label{logarithmic-double-phase-operator}
			\operatorname{div}\left( |\nabla u|^{p-2} \nabla u + \mu(x) \left[\log(e+|\nabla u|) + \frac{|\nabla u|}{q(e+|\nabla u|)}\right]|\nabla u|^{q-2} \nabla u \right)
		\end{align}
		generated by the generalized $N$-function
		\begin{align}\label{N-function-logarithmic-double-phase}
			\H(x,s)=s^p+\mu(x)s^q \log(e+s) \quad\text{for all }(x,s)\in\Omega\times [0,\infty),
		\end{align}
		where $e$ is the Euler number, see Arora--Crespo-Blanco--Winkert \cite{Arora-Crespo-Blanco-Winkert-2023};
	\item[\textnormal{(iii)}]
		Double phase operator with logarithmic perturbation:
		\begin{align}\label{logarithmic-double-phase-operator-2}
			\operatorname{div}\left(\left( |\nabla u|^{p-2} \nabla u + \mu(x)|\nabla u|^{q-2} \nabla u\right) \left[\log(e+|\nabla u|) + \frac{|\nabla u|}{q(e+|\nabla u|)}\right] \right),
		\end{align}
		generated by the generalized $N$-function
		\begin{align}\label{N-function-logarithmic-double-phase-2}
			\H(x,s) =[s^{p}+\mu(x)s^{q}]\log(e+s)\quad\text{for all }(x,s)\in\Omega\times [0,\infty),
		\end{align}
		where $e$ is the Euler number, see Lu--Vetro--Zeng \cite{Lu-Vetro-Zeng-2024}.
\end{enumerate}
We point out that, in the examples above, we do not need that $0\leq\mu(\cdot) \in C^{0,1}(\overline{\Omega})$ and
\begin{align}\label{density-condition2}
	\frac{q}{p}<1+\frac{1}{N},
\end{align}
as required quite often in the double phase setting. Note that \eqref{compemb} holds for \eqref{N-function-double-phase}, \eqref{N-function-logarithmic-double-phase}, and \eqref{N-function-logarithmic-double-phase-2}, see \cite[Proposition 2.18]{Crespo-Blanco-Gasinski-Harjulehto-Winkert-2022},  \cite[Proposition 3.9]{Arora-Crespo-Blanco-Winkert-2023}, and \cite[Proposition 2.24]{Lu-Vetro-Zeng-2024}, respectively, without supposing \eqref{density-condition2}. Sufficient conditions for the compact embedding in \eqref{compemb} to be true can be found in the book by Harjulehto--H\"{a}st\"{o} \cite[see Chapter 6.3]{Harjulehto-Hasto-2019} or the recent paper by Cianchi--Diening \cite[Theorem 3.7]{Cianchi-Diening-2024}. Concerning the nonlinearities on the right-hand side of \eqref{prob}, the choices  $f(s)=s^{-\gamma}$ and $g(s)=s^{r-1}$ are allowed for $0<\gamma<1<q<r<p^*$.

Our main result is the following theorem.

\begin{theorem}\label{mainthm}
	Let \eqref{H} be satisfied. Then there exists $\Lambda>0$ such that, for any $\lambda\in(0,\Lambda)$, problem \eqref{prob} admits two weak solutions with opposite energy sign.
\end{theorem}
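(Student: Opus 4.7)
The plan is to attack \eqref{prob} by the Nehari manifold method combined with the Dr\'abek--Poho\v{z}aev fibering decomposition. Set $\widehat{m}(t)=\int_0^t m(\tau)\dtau$, $F(s)=\int_0^s f(\tau)\dtau$ (convergent at the origin since $\gamma_+<1$), and $G(s)=\int_0^s g(\tau)\dtau$, and consider the energy
\begin{equation*}
	J_\lambda(u) = \widehat{m}\!\left(\int_\Omega \H(x,|\nabla u|)\dx\right) - \lambda\int_\Omega F(u^+)\dx - \int_\Omega G(u^+)\dx
\end{equation*}
on $W^{1,\H}_0(\Omega)$. The associated Nehari set $\Ne_\lambda=\{u\neq 0:\langle J_\lambda'(u),u\rangle=0\}$ is split into $\Ne_\lambda^\pm$ and $\Ne_\lambda^0$ according to the sign of the second derivative of the fibering $\varphi_u(t):=J_\lambda(tu)$ at $t=1$, and I would look for minimisers $u^\pm$ of $J_\lambda$ on $\Ne_\lambda^\pm$; strict positivity will be recovered a posteriori from a strong maximum principle applied to the positive right-hand side $\lambda f(u)+g(u)$.

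\textbf{Key steps.} First I would analyse the fibering $\varphi_u$ for a fixed positive $u\in W^{1,\H}_0(\Omega)\setminus\{0\}$. The two-sided bounds in \eqref{hypL} and \eqref{hypm} yield power-type estimates for each of the three terms in $\varphi_u$. Because $f$ may be singular, $\varphi_u'(t)\to-\infty$ as $t\to 0^+$ while $\varphi_u(0)=0$, so $\varphi_u$ decreases immediately and must attain a local minimum at some $t^+(u)>0$. Hypothesis \eqref{hypind}, which reads $r_->q\eta+l_++1$, places $G$ strictly above the Kirchhoff-gradient contribution in the growth scale, forcing $\varphi_u(t)\to-\infty$ as $t\to\infty$ and giving a second critical point $t^-(u)>t^+(u)$ which is a local maximum. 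A monotonicity argument on a suitable renormalisation of $\varphi_u'$, controlled by the gap in \eqref{hypind}, shows that these are the only two critical points; for $\lambda<\Lambda$ with $\Lambda$ explicit in terms of the Sobolev embedding constants, the maps $u\mapsto t^\pm(u)$ are continuous and $\Ne_\lambda^0=\emptyset$. Next I set $c_\lambda^\pm:=\inf_{\Ne_\lambda^\pm}J_\lambda$ and show $c_\lambda^+<0<c_\lambda^-$. A minimising sequence is bounded in $W^{1,\H}_0(\Omega)$; the compact embedding \eqref{compemb} handles the $f$- and $g$-terms in the limit, and the uniform ellipticity from \eqref{hypL} provides an $(S_+)$-property of $\L$ which, together with the non-vanishing of the Kirchhoff factor along Nehari sequences, upgrades weak to strong convergence of the gradients. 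The condition $\Ne_\lambda^0=\emptyset$ then forces the Lagrange multiplier at each constrained minimiser to vanish, so that the two limits $u^\pm$ are free critical points of $J_\lambda$; a V\'azquez-type strong maximum principle finally gives $u^\pm>0$ and the expected energy signs $J_\lambda(u^+)=c_\lambda^+<0<c_\lambda^-=J_\lambda(u^-)$.

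\textbf{Main obstacle.} The hardest step is the fibering analysis. The combined effect of the unbalanced-growth non-homogeneous $\L$, the Kirchhoff factor $m$, the singular sub-linear $f$, and the super-linear $g$ rules out any reduction to a scalar algebraic equation as in the $p$-Laplacian case. One must work with modular inequalities throughout and produce the threshold $\Lambda$ from a sharp use of \eqref{hypL} and \eqref{hypm}, the gap in \eqref{hypind} being precisely what creates room for two non-degenerate critical points at small $\lambda$. A secondary but nontrivial point is that the Kirchhoff factor could, in principle, spoil the $(S_+)$-property needed for compactness; this is defused by observing that on $\Ne_\lambda^\pm$ the modular $\int_\Omega\H(x,|\nabla u_n|)\dx$ stays bounded below away from zero, so $m$ remains bounded below and the standard $(S_+)$-property of $\L$ alone suffices.
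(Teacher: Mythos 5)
Your overall architecture (fibering decomposition of the Nehari set, emptiness of $\Ne^0$ for small $\lambda$, minimisation on $\Ne^\pm$ with compactness restored via the ${\rm (S_+)}$-property and the lower bound on the modular along Nehari sequences) matches the paper's. But there is a genuine gap at the decisive final step. You write that ``$\Ne_\lambda^0=\emptyset$ then forces the Lagrange multiplier at each constrained minimiser to vanish, so that the two limits $u^\pm$ are free critical points of $J_\lambda$.'' The Lagrange multiplier rule is not applicable here: because $f$ is allowed to be singular at the origin ($\gamma_+$ may be positive), the map $u\mapsto\int_\Omega F(u)\dx$ is not Fr\'echet (nor G\^ateaux) differentiable on $W^{1,\H}_0(\Omega)$ --- the candidate derivative $h\mapsto\int_\Omega f(u)h\dx$ need not even be finite --- so neither $J$ nor the Nehari constraint is $C^1$, and ``critical point'' is not yet defined. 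This is precisely the crux of singular Nehari problems. The paper circumvents it in three separate steps: (i) the minimiser on $\Ne^+$ is shown, via the implicit function theorem applied to $(h,t)\mapsto\psi_{u+h}'(t)$, to be a \emph{local minimiser of $J$ on the whole space}; (ii) one-sided difference quotients $t_n^{-1}(J(u+t_nh)-J(u))\geq 0$ for $h\geq 0$ then yield both $u>0$ a.e.\ (because otherwise $\int_{\{u=0\}}F(t_nh)/t_n\,\dx\to+\infty$) and the variational \emph{inequality} \eqref{weakineq}; (iii) the inequality is upgraded to the weak equation by testing with $(u+\eps\varphi)^+$ and using $\langle T,u\rangle=0$. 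For the $\Ne^-$ minimiser the situation is worse than you suggest: it is a saddle point, not a local minimiser, so even a $C^1$ Lagrange argument would not immediately apply; the paper has to compare $J(\zeta(t_nh)(u+t_nh))$ with $J(\zeta(t_nh)u)$ along the implicitly defined curve staying in $\Ne^-$. None of these devices appear in your outline, and without them the proof does not close.

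Two secondary issues. First, your appeal to ``a V\'azquez-type strong maximum principle'' to recover positivity presupposes regularity theory for this general Musielak--Orlicz class (with merely $\mu\in L^\infty(\Omega)$) that is not available and not needed: as noted above, $u>0$ a.e.\ falls out of the singularity of $f$ itself in the directional-derivative computation, and only positivity a.e.\ is claimed. Second, your ``monotonicity argument on a suitable renormalisation of $\varphi_u'$'' for the uniqueness of the two critical points is exactly the reduction you yourself say is unavailable for non-homogeneous operators; the paper instead derives uniqueness topologically, from the a priori norm separation $\|u^+\|\leq D_1<D_2\leq\|u^-\|$ on $\Ne^+\cup\Ne^0$ versus $\Ne^-\cup\Ne^0$ (which also yields $\Ne^0=\emptyset$) combined with a Bolzano-type argument ruling out two elements of $E_u^+$. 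You would need to supply an argument of this kind rather than a monotonicity claim.
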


The proof of Theorem \ref{mainthm} is based on the fibering method along with the corresponding Nehari manifold related to problem \eqref{prob}. Indeed, even though the energy functional $J\colon W^{1,\mathcal{H}}_0(\Omega)\to\R$ associated with \eqref{prob} is not $C^1$ (due to the presence of the singular term $f$), one can define the Nehari manifold to \eqref{prob} as
\begin{align*}
	\Ne = \{u\in W^{1,\H}_0(\Omega)\setminus\{0\}\colon \psi_u'(1)=0\},
\end{align*}
where $\psi_u\colon (0,+\infty)\to\R$ is the fibering map defined for any $u\in W^{1,\H}_0(\Omega)\setminus\{0\}$ by
\begin{align*}
	\psi_u(t):=J(tu)\quad \text{for all }t>0.
\end{align*}
The idea is then to split the Nehari manifold into three disjoint parts minimizing $J$ over two of them to get the required solutions with different energy sign. This method is not new, but it is the first time that it is applied to a very general setting and so no concrete, long calculations are needed. Indeed, we do not only cover the results obtained by Papageorgiou--Repov\v{s}--Vetro \cite{Papageorgiou-Repovs-Vetro-2021} ($(q,p)$-Laplacian), Papageorgiou--Win\-kert \cite{Papageorgiou-Winkert-2021} (weighted $p$-Laplacian), Liu--Dai--Papageorgiou--Winkert \cite{Liu-Dai-Papageorgiou-Winkert-2022} (double phase operator) or Arora--Fiscella--Mukherjee--Winkert \cite{Arora-Fiscella-Mukherjee-Winkert-2023} (Kirchhoff double phase operator), but we also have much easier and shorter proofs as in those papers and we also cover new operators within our setting, like the logarithmic double phase operators given in \eqref{logarithmic-double-phase-operator} and \eqref{logarithmic-double-phase-operator-2}.

In general, the use of the fibering method along with the Nehari manifold is a very powerful tool and has been further developed by the works of Dr\'{a}bek--Pohozaev \cite{Drabek-Pohozaev-1997} and Sun--Wu--Long \cite{Sun-Wu-Long-2001}. Subsequently, several authors have applied this method to various problems of singular type and non-singular type. We refer to works by Alves--Santos--Silva \cite{Alves-Santos-Silva-2022} (singular-superlinear Schr\"{o}dinger equations with indefinite-sign potential), Arora--Fiscella--Mukherjee--Winkert \cite{Arora-Fiscella-Mukherjee-Winkert-2022} (critical double phase Kirchhoff problems with singular
nonlinearity), Chen--Kuo--Wu \cite{Chen-Kuo-Wu-2011} (Kirchhoff Laplace equations), Fiscella--Mishra \cite{Fiscella-Mishra-2019} (fractional singular Kirchhoff problems), Kumar--R\u{a}dulescu--Sreenadh \cite{Kumar-Radulescu-Sreenadh-2020} (singular problems with unbalanced growth and critical exponent), Liu--Winkert \cite{Liu-Winkert-2022} (double phase problems in $\mathbb{R}^N$), Mukherjee--Sreenadh \cite{Mukherjee-Sreenadh-2019} (fractional $p$-Laplace problems), Tang--Cheng \cite{Tang-Chen-2017} (ground state solutions of Nehari--Pohozaev type for Kirchhoff-type problems with general potentials), Wang--Zhao--Zhao \cite{Wang-Zhao-Zhao-2013} (critical Laplace equations with singular term), see also the references therein. For a survey concerning singular problems, we address the reader to the overview article by Guarnotta--Livrea--Marano \cite{Guarnotta-Livrea-Marano-2022}. It should be mentioned that, in contrast to the results available in the literature (see the list above and also Candito--Guarnotta--Perera \cite{Candito-Guarnotta-Perera-2020} and Candito--Guarnotta--Livrea \cite{Candito-Guarnotta-Livrea-2022}), our method does not require the use of Hardy-Sobolev's inequality.

The paper is organized as follows. In Section \ref{sec2} we introduce our function space and recall some basic facts about generalized $N$-functions and related Musielak--Orlicz Sobolev spaces. Further, we prove some auxiliary results and give the precise definition of the Nehari manifold to problem \eqref{prob} including its splitting into three disjoint parts. Section \ref{sec3} discusses some basic estimates which are needed in the sequel while Section \ref{sec4} gives a detailed study of the Nehari manifold and its properties. Finally, in Section \ref{sec5}, we are able to prove Theorem \ref{mainthm}.

\section{Preliminaries}\label{sec2}
In this section we recall some basic definitions about $N$-functions, Musielak--Orlicz Sobolev spaces and its properties. These results are mainly taken from the monographs by Chlebicka--Gwiazda--\'{S}wierczewska-Gwiazda--Wr\'{o}blewska-Kami\'{n}ska \cite{Chlebicka-Gwiazda-Swierczewska-Gwiazda-Wroblewska-Kaminska-2021},  Diening--Harjulehto--H\"{a}st\"{o}--R$\mathring{\text{u}}$\v{z}i\v{c}ka \cite{Diening-Harjulehto-Hasto-Ruzicka-2011}, Harjulehto--H\"{a}st\"{o} \cite{Harjulehto-Hasto-2019}, Musielak \cite{Musielak-1983}, and Papageorgiou--Winkert \cite{Papageorgiou-Winkert-2024}.  We start with some definitions.

\begin{definition}
	$~$
	\begin{enumerate}
		\item[\textnormal{(i)}]
			A continuous and convex function $\varphi\colon[0,\infty)\to[0,\infty)$ is said to be a $\Phi$-function if $\varphi(0)=0$ and $\varphi(t)>0$ for all $t >0$.
		\item[\textnormal{(ii)}]
			A function $\varphi\colon\Omega \times [0,\infty)\to[0,\infty)$ is said to be a generalized $\Phi$-function if $\varphi(\cdot,t)$ is measurable for all $t\geq 0$ and $\varphi(x,\cdot)$ is a $\Phi$-function for a.a.\,$x\in\Omega$. We denote the set of all generalized $\Phi$-functions on $\Omega$ by $\Phi(\Omega)$.
		\item[\textnormal{(iii)}]
			A function $\varphi\in\Phi(\Omega)$ is locally integrable if $\varphi(\cdot,t) \in L^{1}(\Omega)$ for all $t>0$.
		\item[\textnormal{(iv)}]
			A function $\varphi\in\Phi(\Omega)$ satisfies the $\Delta_2$-condition if there exist a positive constant $C$ and a nonnegative function $h\in L^1(\Omega)$ such that
			\begin{align*}
				\varphi(x,2t) \leq C\varphi(x,t)+h(x)
			\end{align*}
			for a.a.\,$x\in\Omega$ and for all $t\in [0,\infty)$.
		\item[\textnormal{(v)}]
			Given $\varphi, \psi \in \Phi(\Omega)$, we say that $\varphi$ is weaker than $\psi$, denoted by $\varphi \prec \psi$, if there exist two positive constants $C_1, C_2$ and a nonnegative function $h\in L^1(\Omega)$ such that
			\begin{align*}
				\varphi(x,t) \leq C_1 \psi(x,C_2t)+h(x)
			\end{align*}
			for a.a.\,$x\in \Omega$ and for all $t \in[0,\infty)$.
	\end{enumerate}
\end{definition}

For $\varphi \in \Phi(\Omega)$ we denote by $\rho_\varphi$ the corresponding modular given by
\begin{align*}
	\rho_\varphi(u):= \int_\Omega \varphi\left(x,|u|\right)\dx.
\end{align*}
Let $M(\Omega)$ be the set of all measurable functions $u\colon \Omega\to\R$. Then, the Musielak--Orlicz space $L^\varphi(\Omega)$ is defined by
\begin{align*}
	L^\varphi(\Omega):=\left \{u \in M(\Omega)\colon \text{there exists }\alpha>0 \text{ such that }\rho_\varphi(\alpha u)< +\infty \right \}
\end{align*}
equipped with the norm
\begin{align*}
	\|u\|_{\varphi}:=\inf \left\{\alpha >0 \colon \rho_\varphi \left(\frac{u}{\alpha}\right)\leq 1\right\}.
\end{align*}

The next proposition is taken from Musielak \cite[Theorem 7.7 and Theorem 8.5]{Musielak-1983}.

\begin{proposition}\label{prop_complete}
	$~$
	\begin{enumerate}
		\item[\textnormal{(i)}]
			Let $\varphi \in \Phi(\Omega)$. Then $\left(L^\varphi(\Omega),\|\cdot\|_\varphi\right)$ is a Banach space.
		\item[\textnormal{(ii)}]
			Let $\varphi,\psi \in \Phi(\Omega)$ be locally integrable with $\varphi \prec \psi$. Then
			\begin{align*}
				L^\psi(\Omega) \hookrightarrow L^\varphi(\Omega).
			\end{align*}
	\end{enumerate}
\end{proposition}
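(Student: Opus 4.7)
The strategy is to handle the two parts separately, both relying on the Luxemburg-norm formalism for modular spaces and on the convexity of $\varphi$ in its second argument. The underlying tool throughout is the modular--norm inequality $\rho_\varphi(u/\|u\|_\varphi) \leq 1$ whenever $u \in L^\varphi(\Omega) \setminus \{0\}$; it follows from the continuity of $\varphi(x,\cdot)$ together with the monotone convergence theorem applied inside the defining Luxemburg infimum.

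For (i), I would first check that $\|\cdot\|_\varphi$ satisfies the norm axioms. Homogeneity is immediate from the definition, the triangle inequality $\|u + v\|_\varphi \leq \|u\|_\varphi + \|v\|_\varphi$ is obtained by writing $(u+v)/(a+b)$ as a convex combination of $u/a$ and $v/b$ for arbitrary $a > \|u\|_\varphi$ and $b > \|v\|_\varphi$ and invoking convexity of $\varphi(x,\cdot)$, and positive definiteness uses that $\varphi(x,t)>0$ for $t>0$. For completeness, I would extract from any Cauchy sequence $(u_n)$ a subsequence $(u_{n_k})$ with $\|u_{n_{k+1}}-u_{n_k}\|_\varphi \leq 2^{-k}$, so that $\rho_\varphi(2^k(u_{n_{k+1}}-u_{n_k}))\leq 1$. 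Writing each partial sum $v_N := \sum_{k=1}^N |u_{n_{k+1}}-u_{n_k}|$ as the convex combination $\sum_{k=1}^N 2^{-k}\,[2^k|u_{n_{k+1}}-u_{n_k}|]$ (together with a zero-valued term of weight $1-\sum_{k=1}^{N} 2^{-k}$), convexity yields $\rho_\varphi(v_N) \leq 1$; monotone convergence then gives $\rho_\varphi(v) \leq 1$ for $v := \sum_{k\geq 1} |u_{n_{k+1}}-u_{n_k}|$, hence $v < \infty$ a.e. In particular $(u_{n_k})$ converges pointwise a.e. to some $u$, and a Fatou-type argument applied inside the modular shows $u \in L^\varphi(\Omega)$ with $\|u_n - u\|_\varphi \to 0$.

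For (ii), the hypothesis $\varphi \prec \psi$ furnishes constants $C_1,C_2>0$ and $h \in L^1(\Omega)$ with $\varphi(x,t) \leq C_1\psi(x,C_2 t) + h(x)$. For $u\in L^\psi(\Omega)\setminus\{0\}$ and any $\alpha>0$, integrating this pointwise estimate at $t = \alpha|u|$ yields $\rho_\varphi(\alpha u) \leq C_1\rho_\psi(C_2\alpha u) + \|h\|_1$. To convert this into a norm inequality $\|u\|_\varphi \leq K\|u\|_\psi$, I would first use convexity of $\psi(x,\cdot)$ together with the modular--norm inequality for $\psi$ to obtain $\rho_\psi(\theta u/\|u\|_\psi) \leq \theta$ for $\theta \in (0,1]$, and correspondingly choose $\alpha$ of the form $\theta/(C_2\|u\|_\psi)$. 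A further application of convexity of $\varphi$ (specifically $\varphi(x,\sigma t) \leq \sigma\varphi(x,t)$ for $\sigma \in (0,1]$, which uses $\varphi(x,0)=0$) then allows rescaling so that the combined right-hand side falls below $1$, giving the desired estimate $\|u\|_\varphi \leq K \|u\|_\psi$ with $K$ depending only on $C_1$, $C_2$, and $\|h\|_1$; local integrability of $\varphi$ is not actually needed for this step but is used implicitly to guarantee that constants and bounded functions lie in $L^\varphi(\Omega)$.

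The main obstacle will be in part (ii): the integrable ``defect'' $h$ destroys the scale invariance of the comparison between $\varphi$ and $\psi$, so one cannot pass directly from the modular bound to a homogeneous norm inequality. Two layers of convexity rescaling (one for $\psi$ to absorb $C_2$, one for $\varphi$ to absorb $C_1$ and $\|h\|_1$), together with careful bookkeeping of the constants, are required to ensure the combined right-hand side stays below $1$ in the Luxemburg infimum. Part (i) is more routine once one knows the modular--norm inequality; the only subtle point there is confirming that the telescoping-sum majorant is a.e. finite, which is precisely what the convexity estimate $\rho_\varphi(v_N) \leq 1$ delivers without any $\Delta_2$-type growth hypothesis on $\varphi$.
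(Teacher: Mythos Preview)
The paper does not supply a proof of this proposition; it is quoted verbatim from Musielak's monograph (Theorems 7.7 and 8.5 there), so there is no argument in the paper to compare against. Your outline is the standard direct proof one finds in such references and is correct in both parts: the completeness argument via a rapidly Cauchy subsequence together with the convexity bound $\rho_\varphi(v_N)\leq 1$ and Fatou's lemma is the classical route, and your handling of~(ii)---absorbing the constants $C_1,C_2$ and the integrable defect $h$ by two successive convexity rescalings---yields the explicit continuous embedding $\|u\|_\varphi \leq C_2\max\{1,\,C_1+\|h\|_{L^1}\}\,\|u\|_\psi$. Your closing remark about local integrability is also accurate: it plays no role in the norm comparison itself and only serves to ensure the spaces are non-trivial.
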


The following proposition can be found in the books by Musielak \cite[Theorem 8.13]{Musielak-1983} and Diening--Harjulehto--H\"{a}st\"{o}-R$\mathring{\text{u}}$\v{z}i\v{c}ka  \cite[Lemma 2.1.14]{Diening-Harjulehto-Hasto-Ruzicka-2011}.

\begin{proposition}\label{prop_delta_two_and_modular}
	Let $\varphi \in \Phi(\Omega)$.
	\begin{enumerate}
		\item[\textnormal{(i)}]
			If $\varphi$ satisfy the  $\Delta_2$-condition, then
			\begin{align*}
				L^\varphi(\Omega)=\left \{u \in M(\Omega)\colon \rho_\varphi(u)< +\infty \right \}.
			\end{align*}
		\item[\textnormal{(ii)}]
			Furthermore, if $u \in L^\varphi(\Omega)$, then $\rho_\varphi(u)<1$ (resp.\,$=1$; $>1)$ if and only if $\|u\|_\varphi<1$ (resp.\,$=1$; $>1$).
	\end{enumerate}
\end{proposition}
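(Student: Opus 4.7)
\medskip

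\noindent\textbf{Proof proposal.} Both parts are classical consequences of the $\Delta_2$-condition combined with the convexity of $\varphi(x,\cdot)$, and the plan is to extract them by bare-hands iteration of the defining inequality $\varphi(x,2t)\leq C\varphi(x,t)+h(x)$ together with the convexity estimate $\varphi(x,\lambda t)\leq\lambda\varphi(x,t)$ valid for $\lambda\in[0,1]$ (a consequence of $\varphi(x,0)=0$ and convexity).

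For part (i), the inclusion $\{u\in M(\Omega):\rho_\varphi(u)<+\infty\}\subseteq L^\varphi(\Omega)$ is immediate from the definition upon choosing $\alpha=1$. For the reverse inclusion, pick $u\in L^\varphi(\Omega)$ and $\alpha>0$ with $\rho_\varphi(\alpha u)<+\infty$. If $\alpha\geq 1$, monotonicity of $\varphi(x,\cdot)$ (forced by convexity and $\varphi(x,0)=0$) gives $\rho_\varphi(u)\leq\rho_\varphi(\alpha u)<+\infty$. If $\alpha<1$, select $n\in\N$ with $2^{n}\alpha\geq 1$ and apply the $\Delta_2$-inequality $n$ times pointwise to obtain
\begin{align*}
\varphi(x,|u|)\leq \varphi(x,2^{n}\alpha|u|)\leq C^{n}\varphi(x,\alpha|u|)+\Bigl(\sum_{k=0}^{n-1}C^{k}\Bigr)h(x).
\end{align*}
Integrating over $\Omega$ and using $h\in L^{1}(\Omega)$ yields $\rho_\varphi(u)<+\infty$, which closes the argument.

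For part (ii), the easy direction starts from $\|u\|_\varphi<1$: by definition of the infimum, there exists $\alpha\in(\|u\|_\varphi,1)$ with $\rho_\varphi(u/\alpha)\leq 1$, and the convexity estimate then gives $\rho_\varphi(u)=\rho_\varphi(\alpha\cdot u/\alpha)\leq\alpha\rho_\varphi(u/\alpha)\leq\alpha<1$. The converse direction $\rho_\varphi(u)<1\Rightarrow\|u\|_\varphi<1$ is more delicate and is the main technical step: I would establish continuity of the map $\alpha\mapsto\rho_\varphi(u/\alpha)$ on $(0,+\infty)$ via dominated convergence, using the $\Delta_2$-bound provided by part (i) to produce an integrable majorant for sequences $\alpha_{k}\to\alpha_{0}>0$. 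Continuity at $\alpha=1$ then furnishes some $\alpha_{0}\in(0,1)$ with $\rho_\varphi(u/\alpha_{0})\leq 1$, hence $\|u\|_\varphi\leq\alpha_{0}<1$. By contraposition, these two strict equivalences yield the strict equivalence $\rho_\varphi(u)>1\Leftrightarrow\|u\|_\varphi>1$, and the equality case $\rho_\varphi(u)=1\Leftrightarrow\|u\|_\varphi=1$ then follows by exclusion.

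The only real obstacle is justifying the continuity of $\alpha\mapsto\rho_\varphi(u/\alpha)$ needed to pass from $\rho_\varphi(u)<1$ to $\|u\|_\varphi<1$ with strict inequality; without $\Delta_2$ one would only obtain $\|u\|_\varphi\leq 1$ and the strict separation of the three regimes would collapse. The argument of part (i) provides exactly the integrable dominant required to invoke the dominated convergence theorem, so the whole proposition rests on iterating the $\Delta_2$-inequality carefully enough to keep the dominant in $L^{1}(\Omega)$.
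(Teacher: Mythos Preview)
The paper does not supply its own proof of this proposition; it simply cites external references (Musielak, Theorem~8.13, and Diening et al., Lemma~2.1.14) without reproducing any argument. There is therefore nothing in the paper to compare your proposal against, and the relevant question is only whether your sketch is correct on its own terms.

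It is: the iteration of the $\Delta_2$-inequality for part~(i) is exactly the standard route, and your dominated-convergence argument for part~(ii) is sound. You are also right that the $\Delta_2$-condition is what secures the \emph{strict} implication $\rho_\varphi(u)<1\Rightarrow\|u\|_\varphi<1$; without it one may have $\rho_\varphi(u)<1$ while $\rho_\varphi(u/\alpha)=+\infty$ for every $\alpha<1$, so that only the weak equivalence $\rho_\varphi(u)\leq 1\Leftrightarrow\|u\|_\varphi\leq 1$ would survive. The integrable majorant needed for the continuity step is precisely $\varphi(x,2|u|)\leq C\varphi(x,|u|)+h(x)\in L^1(\Omega)$, which dominates $\varphi(x,|u|/\alpha)$ for all $\alpha\in[\tfrac12,1]$ and is delivered by part~(i).
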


Now we can state the definition of a $N$-function.

\begin{definition}
	A function $\varphi\colon [0,\infty) \to [0,\infty)$ is called $N$-function if it is a $\Phi$-function such that
	\begin{align*}
		\lim_{t\to 0^+} \frac{\varphi(t)}{t}=0
		\quad\text{and}\quad
		\lim_{t\to\infty} \frac{\varphi(t)}{t}=\infty.
	\end{align*}
	We call a function $\varphi\colon\Omega\times [0,\infty)\to[0,\infty)$ a generalized
	$N$-function if $\varphi(\cdot,t)$ is measurable for all $t \in [0,\infty)$ and $\varphi(x,\cdot)$ is a $N$-function for a.a.\,$x\in\Omega$. We denote the class
	of all generalized $N$-functions by $N(\Omega)$. 
\end{definition}

Now, let $\varphi\in\Phi(\Omega)$. The corresponding Sobolev space $W^{1,\varphi}(\Omega)$ is defined by
\begin{align*}
	W^{1,\varphi}(\Omega) := \left \{u \in L^\varphi(\Omega) \colon |\nabla u| \in L^\varphi(\Omega) \right\}
\end{align*}
equipped with the norm
\begin{align*}
	\|u\|_{1,\varphi} = \|u\|_\varphi+\|\nabla u\|_\varphi
\end{align*}
where $\|\nabla u\|_\varphi=\| \, |\nabla u| \,\|_\varphi$. If $\varphi \in N(\Omega)$ is locally integrable, we denote by $W^{1,\varphi}_0(\Omega)$ the completion of $C^\infty_0(\Omega)$ in $W^{1,\varphi}(\Omega)$.

The next theorem gives a criterion when the Sobolev spaces are Banach spaces and also reflexive. This result can be found in Musielak \cite[Theorem 10.2]{Musielak-1983} and  Fan \cite[Proposition 1.7 and 1.8]{Fan-2012}.

\begin{theorem}\label{prop_complete2}
	Let $\varphi\in N(\Omega)$ be locally integrable such that
	\begin{align*}
		\inf_{x\in\Omega} \varphi(x,1)>0.
	\end{align*}
	Then the spaces $W^{1,\varphi}(\Omega)$ and $W^{1,\varphi}_0(\Omega)$ are separable Banach spaces. Moreover, they are reflexive if $L^\varphi(\Omega)$ is reflexive.
\end{theorem}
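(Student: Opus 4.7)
The plan is to realize $W^{1,\varphi}(\Omega)$ as a closed subspace of the product space $X := [L^\varphi(\Omega)]^{N+1}$ via the isometric embedding
\[
	T\colon W^{1,\varphi}(\Omega)\to X, \qquad T(u) := (u,\partial_1 u,\dots,\partial_N u),
\]
and then transfer the Banach, separability, and reflexivity properties from $L^\varphi(\Omega)$ to $W^{1,\varphi}(\Omega)$, and finally to $W^{1,\varphi}_0(\Omega)$ by passing to a closed subspace. First I would norm $X$ by $\|(v_0,v_1,\dots,v_N)\|_X := \|v_0\|_\varphi + \sum_{i=1}^N \|v_i\|_\varphi$, so that $T$ is an isometry onto its image $T(W^{1,\varphi}(\Omega))$ by the very definition of $\|\cdot\|_{1,\varphi}$. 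Since $L^\varphi(\Omega)$ is a Banach space by Proposition~\ref{prop_complete}(i), the product $X$ is a Banach space.

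The heart of the argument is showing that $T(W^{1,\varphi}(\Omega))$ is closed in $X$. For this I would invoke the hypothesis $\inf_{x\in\Omega}\varphi(x,1)>0$: combining it with convexity and $\varphi(x,0)=0$ shows $\varphi(x,t)\geq c\,t$ for $t\in[0,1]$ uniformly in $x$, and together with local integrability of $\varphi$ this yields the continuous embedding $L^\varphi(\Omega)\hookrightarrow L^1_{\mathrm{loc}}(\Omega)$ (splitting $|u|$ into the regions $\{|u|\leq 1\}$ and $\{|u|>1\}$ and using that convergence in norm $\|\cdot\|_\varphi$ controls the modular up to a constant on any relatively compact subset). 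Thus, given a sequence $(u_n)\subseteq W^{1,\varphi}(\Omega)$ with $T(u_n)\to(v_0,v_1,\dots,v_N)$ in $X$, we get $u_n\to v_0$ and $\partial_i u_n\to v_i$ in $L^1_{\mathrm{loc}}(\Omega)$, hence in $\mathcal{D}'(\Omega)$, so $v_i=\partial_i v_0$ as distributions for $i=1,\dots,N$. Since each $v_i\in L^\varphi(\Omega)$, this places $v_0$ in $W^{1,\varphi}(\Omega)$ with $T(v_0)=(v_0,v_1,\dots,v_N)$, proving closedness. Consequently $W^{1,\varphi}(\Omega)$ inherits completeness from $X$.

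For separability I would argue directly on $X$: a Banach space is separable iff each factor is, so it suffices to note separability of $L^\varphi(\Omega)$, which is the content of Musielak \cite[Theorem 7.10]{Musielak-1983} (and is invoked here as part of the cited source). A closed subspace of a separable metric space is separable, hence so is $W^{1,\varphi}(\Omega)$. For reflexivity, under the extra assumption that $L^\varphi(\Omega)$ is reflexive, the finite product $X$ is reflexive, and closed subspaces of reflexive Banach spaces are reflexive (by the Eberlein–\v{S}mulian characterization, or simply because the unit ball of a closed subspace is weakly closed in the reflexive ambient space). Hence $W^{1,\varphi}(\Omega)$ is reflexive.

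Finally, $W^{1,\varphi}_0(\Omega)$ is by definition the closure of $C^\infty_0(\Omega)$ in $W^{1,\varphi}(\Omega)$, hence a closed subspace of it; therefore it inherits completeness, separability, and (under reflexivity of $L^\varphi(\Omega)$) reflexivity by exactly the same abstract principles used above. The main obstacle I anticipate is justifying the embedding $L^\varphi(\Omega)\hookrightarrow L^1_{\mathrm{loc}}(\Omega)$ that underlies closedness of $T(W^{1,\varphi}(\Omega))$; once that is in hand, the rest is soft functional analysis.
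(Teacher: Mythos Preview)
The paper does not supply its own proof of this theorem; it merely cites Musielak \cite[Theorem~10.2]{Musielak-1983} and Fan \cite[Propositions~1.7 and 1.8]{Fan-2012}. Your outline is exactly the standard argument one finds in those sources: embed $W^{1,\varphi}(\Omega)$ linearly into the product $[L^\varphi(\Omega)]^{N+1}$, show the image is closed using $L^\varphi(\Omega)\hookrightarrow L^1_{\mathrm{loc}}(\Omega)$, and then pull back completeness, separability, and reflexivity from the product. So in spirit your approach coincides with the cited literature.

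Two small technical corrections are worth making. First, $T$ is not literally an isometry for the norm $\|\cdot\|_{1,\varphi}=\|\cdot\|_\varphi+\|\,|\nabla\cdot|\,\|_\varphi$ used in the paper, since $\|\,|\nabla u|\,\|_\varphi$ is not the same as $\sum_{i=1}^N\|\partial_i u\|_\varphi$; however, the pointwise inequalities $|\partial_i u|\le|\nabla u|\le\sum_j|\partial_j u|$ together with the lattice property of $\|\cdot\|_\varphi$ show the two expressions are equivalent up to a factor $N$, so $T$ is a linear topological embedding, which is all you need. Second, the convexity estimate goes the other way than you wrote: from $\varphi(x,0)=0$ and convexity one gets $\varphi(x,t)\ge t\,\varphi(x,1)\ge ct$ for $t\ge 1$ (not for $t\in[0,1]$). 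This is precisely the inequality needed to bound $\int_{K\cap\{|u|>1\}}|u|\,{\rm d}x$ by the modular, while the region $\{|u|\le 1\}$ is handled trivially by $|K|<\infty$. With these adjustments your argument goes through.
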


Let us now consider the generalized $N$-function $\mathcal{H}$ satisfying hypotheses \eqref{hypL}. First note, that from Lemma 2.3.16 in Chlebicka--Gwiazda--\'{S}wierczewska-Gwiazda--Wr\'{o}blewska-Kami\'{n}ska \cite{Chlebicka-Gwiazda-Swierczewska-Gwiazda-Wroblewska-Kaminska-2021}, we know that $\mathcal{H}$ satisfies the $\Delta_2$-condition and so, by Proposition \ref{prop_delta_two_and_modular}, the space $L^\mathcal{H}(\Omega)$ can be given by
\begin{align*}
	L^\mathcal{H}(\Omega)=\left \{u \in M(\Omega)\colon \rho_\mathcal{H}(u)< +\infty \right \}
\end{align*}
with the associated modular $\rho_\mathcal{H}(\cdot)$. Also, Corollary 3.5.5 in \cite{Chlebicka-Gwiazda-Swierczewska-Gwiazda-Wroblewska-Kaminska-2021} guarantees that $L^\mathcal{H}(\Omega)$ is reflexive and so, by Theorem \ref{prop_complete2}, the spaces $W^{1,\mathcal{H}}(\Omega)$ and $W^{1,\mathcal{H}}_0(\Omega)$ are separable and reflexive. Note that \eqref{compemb} implies the validity of the Poincar\'{e} inequality, i.e.,
\begin{align}\label{poincare-inequality}
	\|u\|_\mathcal{H} \leq C \|\nabla u\|_\mathcal{H} \quad\text{for all }u \in W^{1,\mathcal{H}}_0(\Omega).
\end{align}
We refer to the proof of Proposition 2.18 in \cite{Crespo-Blanco-Gasinski-Harjulehto-Winkert-2022} which can be done for any generalized $N$-function in the same way. Using \eqref{poincare-inequality}, we can equip the space $W^{1,\mathcal{H}}_0(\Omega)$ with the equivalent norm
\begin{align*}
	\|u\|=\|\nabla u\|_\mathcal{H}\quad\text{for all }u \in W^{1,\mathcal{H}}_0(\Omega).
\end{align*}
Note that the requirement to suppose \eqref{compemb} is very general.  Indeed, in Harjulehto--H\"{a}st\"{o} \cite[see Chapter 6.3]{Harjulehto-Hasto-2019} or Cianchi--Diening \cite[Theorem 3.7]{Cianchi-Diening-2024}  one can find sufficient conditions for \eqref{compemb} to hold and one key assumption is condition (A1), which says the following:
\begin{enumerate}
	\item[$\bullet$]
		A generalized $N$-function $\varphi\colon\Omega \times [0,\infty)\to[0,\infty)$ satisfies \textnormal{(A1)} if there exists $\beta \in (0,1)$ such that
		\begin{align*}
			\beta \varphi^{-1}(x,t) \leq \varphi^{-1}(y,t)
		\end{align*}
	for every $t \in [1,\frac{1}{|B|}]$, for a.\,a.\,$x,y\in B\cap \Omega$
	and for every ball $B$ with $|B|\leq 1$.
\end{enumerate}
We avoided to suppose conditions like (A1) because the embedding \eqref{compemb} is more general than assumption (A1). In fact, in \cite{Arora-Crespo-Blanco-Winkert-2023}, \cite{Crespo-Blanco-Gasinski-Harjulehto-Winkert-2022}, and \cite{Lu-Vetro-Zeng-2024} the validity of \eqref{compemb} for the logarithmic double phase operator, the double phase operator, and the double phase operator with logarithmic perturbation have been proved without condition (A1). For (A1) to be true for these operators we have to require that $0\leq\mu(\cdot) \in C^{0,1}(\overline{\Omega})$ and
\begin{align}\label{density-condition}
	\frac{q}{p}<1+\frac{1}{N},
\end{align}
see \cite[Theorem 3.12]{Arora-Crespo-Blanco-Winkert-2023}, \cite[Theorem 2.23]{Crespo-Blanco-Gasinski-Harjulehto-Winkert-2022}, and \cite[Proposition 2.27]{Lu-Vetro-Zeng-2024}. However, the compactness of \eqref{compemb} still holds when $0\leq\mu(\cdot) \in L^\infty(\Omega)$ without supposing \eqref{density-condition}, see \cite[Proposition 3.9]{Arora-Crespo-Blanco-Winkert-2023}, \cite[Proposition 2.18]{Crespo-Blanco-Gasinski-Harjulehto-Winkert-2022}, and \cite[Proposition 2.24]{Lu-Vetro-Zeng-2024}.

Next, we introduce the following functions, useful to compare generalized $N$-functions with suitable power functions. To this end, for given $-\infty<\alpha\leq\beta<+\infty$, we define
\begin{align}\label{W}
	\underline{W}_\alpha^\beta(t) := \min\{t^\alpha,t^\beta\} \quad \text{and} \quad \overline{W}_\alpha^\beta(t) := \max\{t^\alpha,t^\beta\}.
\end{align}

The next proposition summarizes the information carried by the so-called  `indices', i.e., the quantities appearing in \eqref{indexk} below. Although the result is well-known for $N$-functions, for the sake of completeness we will sketch its proof in a more general case, where no convexity of functions is required.

\begin{proposition}\label{indexinfo}
	Let $K\colon [0,+\infty)\to[0,+\infty)$ be of class $C^2$, strictly increasing, and such that $K(0)=0$. Set $k:=K'$ and suppose
	\begin{equation}\label{mildsing}
		\lim_{s\to 0^+} sk(s)=0,
	\end{equation}
	as well as
	\begin{equation}\label{indexk}
		-\infty<i_k:= \inf_{s>0} \frac{sk'(s)}{k(s)}\leq\sup_{s>0} \frac{sk'(s)}{k(s)}=:s_k<+\infty.
	\end{equation}
	Then
	\begin{equation}\label{indexK}
		i_k+1\leq \inf_{s>0} \frac{sk(s)}{K(s)}\leq\sup_{s>0} \frac{sk(s)}{K(s)}\leq s_k+1.
	\end{equation}
	Moreover,
	\begin{equation}\label{growthk}
		k(1) \underline{W}_{i_k}^{s_k}(s)\leq k(s)\leq k(1)\overline{W}_{i_k}^{s_k}(s)
	\end{equation}
	and
	\begin{equation}\label{growthK}
		K(1)\underline{W}_{i_k+1}^{s_k+1}(s) \leq K(s)\leq K(1)\overline{W}_{i_k+1}^{s_k+1}(s)
	\end{equation}
	for all $s>0$.
\end{proposition}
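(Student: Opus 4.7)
The plan is to establish \eqref{growthk}, then \eqref{indexK}, and finally deduce \eqref{growthK} by reapplying the argument used for \eqref{growthk}. Note that the finiteness of $i_k, s_k$ in \eqref{indexk} forces $k(s)>0$ for all $s>0$ (otherwise $sk'(s)/k(s)$ would be undefined or unbounded), so $\log k$ is well defined and $C^1$ on $(0,+\infty)$; likewise $K(s)>0$ for $s>0$ since $K$ is strictly increasing with $K(0)=0$.

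For \eqref{growthk}, I would rewrite \eqref{indexk} as
\begin{equation*}
	\frac{i_k}{t}\leq (\log k)'(t)=\frac{k'(t)}{k(t)}\leq \frac{s_k}{t}\quad\text{for all }t>0,
\end{equation*}
and integrate these inequalities on the interval with endpoints $1$ and $s$. For $s\geq 1$ this yields $i_k\log s\leq \log k(s)-\log k(1)\leq s_k\log s$, i.e.\ $k(1)s^{i_k}\leq k(s)\leq k(1)s^{s_k}$, which coincides with \eqref{growthk} since for $s\geq 1$ one has $\underline{W}_{i_k}^{s_k}(s)=s^{i_k}$ and $\overline{W}_{i_k}^{s_k}(s)=s^{s_k}$. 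For $0<s<1$, integrating from $s$ to $1$ and rearranging gives $k(1)s^{s_k}\leq k(s)\leq k(1)s^{i_k}$, which again matches \eqref{growthk} because in this regime the roles of $i_k$ and $s_k$ in $\underline{W},\overline{W}$ are exchanged.

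For \eqref{indexK}, I would use the standard monotonicity trick. Define $f(s):=sk(s)-(i_k+1)K(s)$. Since $k=K'$,
\begin{equation*}
	f'(s)=k(s)+sk'(s)-(i_k+1)k(s)=sk'(s)-i_k k(s)\geq 0
\end{equation*}
by definition of $i_k$, and $f(0^+)=0$ thanks to \eqref{mildsing} and $K(0)=0$. Hence $f\geq 0$, which gives the lower bound in \eqref{indexK} after dividing by $K(s)>0$. The upper bound is obtained symmetrically by considering $g(s):=(s_k+1)K(s)-sk(s)$, whose derivative $s_k k(s)-sk'(s)$ is non-negative and which also vanishes at $0$.

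Finally, \eqref{growthK} is proved exactly as \eqref{growthk}, but applied now to $K$ in place of $k$: \eqref{indexK} translates into
\begin{equation*}
	\frac{i_k+1}{s}\leq (\log K)'(s)=\frac{k(s)}{K(s)}\leq \frac{s_k+1}{s}\quad\text{for all }s>0,
\end{equation*}
and integration on the interval with endpoints $1$ and $s$, together with the same case distinction $s\geq 1$ vs.\ $0<s<1$, yields $K(1)\underline{W}_{i_k+1}^{s_k+1}(s)\leq K(s)\leq K(1)\overline{W}_{i_k+1}^{s_k+1}(s)$. The only genuinely delicate point is the use of \eqref{mildsing} to secure $f(0^+)=g(0^+)=0$ and thus pass from the sign of the derivatives to the sign of $f,g$; everything else is routine bookkeeping, and no convexity of $K$ is needed anywhere.
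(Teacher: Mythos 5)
Your proposal is correct and follows essentially the same route as the paper: \eqref{growthk} and \eqref{growthK} by integrating the logarithmic derivative bounds over $[1,s]$ or $[s,1]$ with the same case distinction, and \eqref{indexK} via the boundary term at $0$ controlled by \eqref{mildsing} (your monotonicity trick with $f$ and $g$ is just the paper's integration by parts restated). The only difference is the order of the three steps, which is immaterial.
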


\begin{proof}
	By \eqref{indexk} we have
	\begin{align}\label{proof-1}
		i_k k(t) \leq tk'(t) \leq s_k k(t) \quad \text{for all }t>0.
	\end{align}
	Integrating by parts, along with $K(0)=0$ and \eqref{mildsing}, yields
	\begin{align}\label{proof-2}
		i_k K(s) \leq sk(s)-K(s) \leq s_k K(s) \quad \text{for all }s>0,
	\end{align}
	ensuring \eqref{indexK}.

	Taking any $s\geq 1$ and integrating \eqref{proof-1} in $[1,s]$ we infer
	\begin{align*}
		i_k\int_1^s \frac{\dt}{t} \leq \int_1^s \frac{k'(t)}{k(t)} \dt \leq s_k \int_1^s \frac{\dt}{t}.
	\end{align*}
	Recalling that $k>0$ because of the monotonicity of $K$, we deduce
	\begin{align*}
		\log s^{i_k} \leq\log \frac{k(s)}{k(1)} \leq \log s^{s_k},
	\end{align*}
	which implies \eqref{growthk} for $s\geq 1$. Now suppose $s\in(0,1)$. Integrating \eqref{proof-1} in $[s,1]$ leads to
	\begin{align*}
		i_k \int_s^1 \frac{\dt}{t}  \leq \int_s^1 \frac{k'(t)}{k(t)} \dt \leq s_k \int_s^1 \frac{\dt}{t}.
	\end{align*}
	Thus,
	\begin{align}\label{nearzero}
		\log s^{-i_k} \leq \log \frac{k(1)}{k(s)} \leq \log s^{-s_k},
	\end{align}
	which gives \eqref{growthk} for $s\in(0,1)$. The proof of \eqref{growthK} is analogous, taking \eqref{proof-2} into account.
\end{proof}

\begin{remark}
	It is worth noticing that \eqref{mildsing} is automatically satisfied when $i_k>-1$, due to \eqref{nearzero}. This is the case of the $N$-function $K:=\H(x,\cdot)$ (since \eqref{hypL}\eqref{hypL3} forces $i_k>0$) and of the singular term $k:=f$ (see \eqref{hypf}).
\end{remark}

Adapting standard arguments for $N$-functions (see, e.g., Fukagai--Ito--Narukawa \cite[Lemma 2.1]{Fukagai-Ito-Narukawa-2006}), it is readily seen that the following result holds true.

\begin{proposition}\label{modularnorm}
	Let $\Phi$ be a generalized $N$-function of class $C^1$. Suppose that
	\begin{align*}
		a:=\inf_{(x,s)\in\Omega\times(0,+\infty)} \frac{s\partial_s \Phi(x,s)}{\Phi(x,s)}>1, \quad b:=\sup_{(x,s)\in\Omega\times(0,+\infty)} \frac{s\partial_s \Phi(x,s)}{\Phi(x,s)}<+\infty.
	\end{align*}
	Then
	\begin{align*}
		\underline{W}_a^b(\|u\|_\Phi) \leq \int_\Omega \Phi(x,|u|) \dx \leq \overline{W}_a^b(\|u\|_\Phi) \quad \text{for all } u \in L^\Phi(\Omega).
	\end{align*}
\end{proposition}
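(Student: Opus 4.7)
The plan is to reduce Proposition \ref{modularnorm} to a pointwise scaling estimate for $\Phi(x,\cdot)$ and then to invoke the modular-norm dictionary (Proposition \ref{prop_delta_two_and_modular}). The key auxiliary fact I want is: for a.a.\,$x\in\Omega$, all $s\geq 0$, and every $\sigma>0$,
\begin{equation*}
	\underline{W}_a^b(\sigma)\,\Phi(x,s) \;\leq\; \Phi(x,\sigma s) \;\leq\; \overline{W}_a^b(\sigma)\,\Phi(x,s).
\end{equation*}
Once this is established, the proposition is immediate: with $\tau:=\|u\|_\Phi>0$ (the case $u\equiv 0$ being trivial), I apply the estimate with $\sigma=\tau$ and $s=|u(x)|/\tau$, integrate over $\Omega$, and use that $\rho_\Phi(u/\tau)=1$ to conclude $\underline{W}_a^b(\tau)\leq \rho_\Phi(u)\leq \overline{W}_a^b(\tau)$.

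To prove the pointwise scaling bound, I would fix $x$ and $s>0$, set $F(\sigma):=\Phi(x,\sigma s)$, and observe that
\begin{equation*}
	\frac{\sigma F'(\sigma)}{F(\sigma)} \;=\; \frac{(\sigma s)\,\partial_t\Phi(x,\sigma s)}{\Phi(x,\sigma s)} \;\in\; [a,b]
\end{equation*}
by the assumption on the indices of $\Phi$. Dividing by $\sigma$ and integrating $F'/F$ on $[1,\sigma]$ if $\sigma\geq 1$, or on $[\sigma,1]$ if $\sigma\leq 1$, yields $F(\sigma)\in[\sigma^a F(1),\sigma^b F(1)]$ in the first case and $F(\sigma)\in[\sigma^b F(1),\sigma^a F(1)]$ in the second, which is exactly the claimed $\underline{W}_a^b$--$\overline{W}_a^b$ sandwich. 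This is essentially the same computation used to prove \eqref{growthk}--\eqref{growthK} in Proposition \ref{indexinfo}, applied fiberwise in $x$.

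To close the argument I need $\rho_\Phi(u/\tau)=1$. Since $b<+\infty$, the scaling bound with $\sigma=2$ gives $\Phi(x,2s)\leq 2^b\Phi(x,s)$, so $\Phi$ satisfies the $\Delta_2$-condition; Proposition \ref{prop_delta_two_and_modular}(ii) then equates $\|\cdot\|_\Phi=1$ with $\rho_\Phi(\cdot)=1$, and by definition of the Luxemburg norm this forces $\rho_\Phi(u/\|u\|_\Phi)=1$ whenever $\|u\|_\Phi>0$. Putting the three ingredients together gives the claim.

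I do not expect any genuine obstacle: the reasoning is the standard Fukagai--Ito--Narukawa scheme, and all the ingredients either are quoted in the paper (Proposition \ref{prop_delta_two_and_modular}) or follow from the index hypotheses by a one-line ODE computation. The only point requiring a sentence of care is the mild technicality $\rho_\Phi(u/\|u\|_\Phi)=1$, which as just noted reduces to the automatic $\Delta_2$-condition.
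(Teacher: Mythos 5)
Your argument is correct and is precisely the standard Fukagai--Ito--Narukawa scheme that the paper itself invokes without writing out (it only cites \cite{Fukagai-Ito-Narukawa-2006} and says the result is "readily seen"): the fiberwise integration of $\sigma F'(\sigma)/F(\sigma)\in[a,b]$ gives the scaling sandwich, and the norm--modular unit-ball identity closes the proof. Your extra remark that $b<+\infty$ yields the $\Delta_2$-condition, so that $\rho_\Phi(u/\|u\|_\Phi)=1$ is legitimate via Proposition \ref{prop_delta_two_and_modular}(ii), is exactly the right point of care.
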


Next, we are going to prove the properties of the operator.

\begin{lemma}\label{S+}
    Let \eqref{hypL} be satisfied. Then $\L\colon  W^{1,\H}_0(\Omega)\to W^{1,\H}_0(\Omega)^*$ defined in \eqref{operator} is a strictly monotone operator and fulfills the ${\rm (S_+)}$-property.
\end{lemma}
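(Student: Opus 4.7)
The plan is to derive both properties from the pointwise strict monotonicity of the vector field $A(x,\xi):=\partial_s\H(x,|\xi|)\frac{\xi}{|\xi|}$ (extended by $A(x,0)=0$), which represents $\L$ via the duality pairing
\begin{align*}
\langle -\L(u),v\rangle = \int_\Omega A(x,\nabla u)\cdot\nabla v\dx.
\end{align*}
A direct computation shows that for $\xi\neq 0$ the Hessian of $\xi\mapsto\H(x,|\xi|)$ has eigenvalues $\partial^2_{ss}\H(x,|\xi|)$ in the radial direction and $\frac{\partial_s\H(x,|\xi|)}{|\xi|}$ (with multiplicity $N-1$) in the tangential directions. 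Both are strictly positive by \eqref{hypL}\eqref{hypL1} and \eqref{hypL}\eqref{hypL3}, so $\xi\mapsto\H(x,|\xi|)$ is strictly convex and $A$ is pointwise strictly monotone, i.e., $(A(x,\xi_1)-A(x,\xi_2))\cdot(\xi_1-\xi_2)>0$ whenever $\xi_1\neq\xi_2$. Integrating over $\Omega$ and using the Poincaré inequality \eqref{poincare-inequality} (to ensure that $u\neq v$ implies $\nabla u\neq\nabla v$ on a set of positive measure) yields the strict monotonicity of $-\L$.

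For the $\rm(S_+)$-property, I would assume $u_n\rightharpoonup u$ in $W^{1,\H}_0(\Omega)$ and $\limsup_n\langle -\L(u_n)+\L(u),u_n-u\rangle\leq 0$. Since $-\L(u)\in W^{1,\H}_0(\Omega)^*$ and $u_n-u\rightharpoonup 0$, the term $\langle \L(u),u_n-u\rangle$ vanishes, leading to
\begin{align*}
\limsup_{n\to\infty}\int_\Omega\bigl(A(x,\nabla u_n)-A(x,\nabla u)\bigr)\cdot(\nabla u_n-\nabla u)\dx \leq 0.
\end{align*}
The integrand is pointwise nonnegative by the pointwise monotonicity above, so it tends to $0$ in $L^1(\Omega)$, and along a subsequence a.e.\,in $\Omega$. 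Invoking pointwise strict monotonicity in the stronger quantitative form (the eigenvalue bounds furnish a modulus of monotonicity that degenerates only at $\xi_1=\xi_2$), this forces $\nabla u_n\to\nabla u$ a.e.\,in $\Omega$ along the subsequence.

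It remains to upgrade a.e.\,convergence of $\nabla u_n$ to norm convergence in $W^{1,\H}_0(\Omega)$. Because $\H$ satisfies the $\Delta_2$-condition (Lemma 2.3.16 of \cite{Chlebicka-Gwiazda-Swierczewska-Gwiazda-Wroblewska-Kaminska-2021}), Proposition \ref{prop_delta_two_and_modular} reduces this to proving $\rho_\H(|\nabla u_n-\nabla u|)\to 0$. From \eqref{indexK} in Proposition \ref{indexinfo} applied to $K:=\H(x,\cdot)$, one gets the coercivity $A(x,\xi)\cdot\xi \geq p\, \H(x,|\xi|)$, which, combined with the vanishing of $\int_\Omega A(x,\nabla u_n)\cdot\nabla(u_n-u)\dx$ and boundedness of $\|u_n\|$, yields equi-integrability of $\{\H(x,|\nabla u_n|)\}_n$. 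A convexity bound of the form $\H(x,|\nabla u_n-\nabla u|)\leq C\bigl(\H(x,|\nabla u_n|)+\H(x,|\nabla u|)\bigr)$ (available thanks to $\Delta_2$) transfers equi-integrability to the difference, and Vitali's convergence theorem together with the a.e.\,convergence $\nabla u_n\to\nabla u$ gives the required modular convergence. Passing from subsequences to the whole sequence is standard, since the argument applies to every subsequence.

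The main obstacle will be the final equi-integrability and Vitali step: the Musielak-Orlicz setting prevents a direct dominated convergence argument and forces one to exploit the $\Delta_2$-condition and the coercivity estimate carefully. Once that is in place, strict monotonicity and $\rm(S_+)$ follow in a unified fashion from the eigenvalue analysis of the Hessian of $\H(x,|\cdot|)$.
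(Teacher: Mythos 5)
Your proposal is a sound strategy, but it takes a genuinely different route from the paper. The paper disposes of the lemma in a few lines by invoking Proposition 3.12 of Crespo-Blanco \cite{Crespo-Blanco-2024}, which delivers both strict monotonicity and the $({\rm S}_+)$-property once its hypotheses are checked; the only nontrivial one is $\lim_{s\to+\infty}\partial^2_{ss}\H(x,c+s)(c-s)^2=+\infty$, which the authors reduce to $\lim_{s\to+\infty}s^2\partial^2_{ss}\H(x,s)=+\infty$ and verify by writing $s^2\partial^2_{ss}\H=\frac{s\partial^2_{ss}\H}{\partial_s\H}\cdot s\partial_s\H\geq l_-\partial_s\H(x,1)s^{l_-+1}$ via \eqref{hypL}\eqref{hypL1}, \eqref{hypL}\eqref{hypL3}, and Proposition \ref{indexinfo}. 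You instead rebuild the result from first principles: your eigenvalue computation for the Hessian of $\xi\mapsto\H(x,|\xi|)$ is correct, it does yield pointwise strict monotonicity of the vector field (and hence strict monotonicity of the operator via Poincar\'e), and your $({\rm S}_+)$ scheme (nonnegative integrand $\to 0$ in $L^1$, a.e.\,convergence of gradients, then equi-integrability and Vitali to upgrade to modular and hence norm convergence under $\Delta_2$) is the classical argument. What the paper's route buys is brevity and the outsourcing of exactly the step you identify as the main obstacle; what your route buys is self-containedness and transparency about where \eqref{hypL}\eqref{hypL1} and \eqref{hypL}\eqref{hypL3} enter. Be aware, though, that two steps of your sketch are still plans rather than proofs: the passage from $(A(x,\nabla u_n)-A(x,\nabla u))\cdot(\nabla u_n-\nabla u)\to 0$ a.e.\ to $\nabla u_n\to\nabla u$ a.e.\ needs the standard boundedness-plus-strict-monotonicity argument spelled out, and the equi-integrability of $\H(x,|\nabla u_n|)$ is most cleanly obtained by first showing $\int_\Omega A(x,\nabla u_n)\cdot\nabla u_n\dx\to\int_\Omega A(x,\nabla u)\cdot\nabla u\dx$ (which requires weak convergence of $A(x,\nabla u_n)$ in the conjugate Musielak--Orlicz space) and then applying a Scheff\'e-type argument to the nonnegative integrands $A(x,\nabla u_n)\cdot\nabla u_n\geq p\,\H(x,|\nabla u_n|)$.
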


\begin{proof}
	The result is a consequence of Proposition 3.12 by Crespo-Blanco \cite{Crespo-Blanco-2024}. The only nontrivial condition to verify is
	\begin{align*}
		\lim_{s\to+\infty} \partial^2_{ss} \H(x,c+s)(c-s)^2=+\infty \quad \text{for all } c>0.
	\end{align*}
	To this aim, it suffices to prove that
	\begin{align}\label{crespoblanco}
		\lim_{s\to+\infty} s^2\partial^2_{ss} \H(x,s)=+\infty,
	\end{align}
	since the change of variable $\tau=s+c$ yields
	\begin{align*}
		\lim_{s\to+\infty} \partial^2_{ss} \H(x,c+s)(c-s)^2=\lim_{\tau\to+\infty} \tau^2\partial^2_{ss}\H(x,\tau)\left(\frac{\tau-2c}{\tau}\right)^2 \quad \text{for all } c>0.
	\end{align*}
	Proposition \ref{indexinfo} (applied with $k=\partial_s\H(x,\cdot)$), \eqref{hypL}\eqref{hypL1}, \eqref{hypL}\eqref{hypL3}, and \eqref{growthK} entail
	\begin{align*}
	\lim_{s\to+\infty}s^2\partial^2_{ss}\H(x,s) &= \lim_{s\to+\infty}\frac{s\partial^2_{ss}\H(x,s)}{\partial_s\H(x,s)} s\partial_s\H(x,s) \\
    &\geq l_- \partial_s\H(x,1) \lim_{s\to+\infty}s^{l_-+1} = +\infty,
	\end{align*}
	ensuring \eqref{crespoblanco}.
\end{proof}

Next, we will make use of this simple real-analysis result.

\begin{proposition}\label{realanal}
	Let $\varphi\colon (0,+\infty)\to\R$ be a differentiable function such that
	\begin{enumerate}
		\item[\textnormal{(i)}]
			$\displaystyle{\limsup_{t\to 0^+} \varphi(t)<0}$ and $\displaystyle{\limsup_{t\to +\infty} \varphi(t)<0}$;
		\item[\textnormal{(ii)}]
			$\displaystyle{\max_{t\in(0,+\infty)} \varphi(t) > 0}$;
		\item[\textnormal{(iii)}]
			each zero of $\varphi$ is non-degenerate, i.e., $\varphi(t)=0$ implies $\varphi'(t)\neq 0$.
	\end{enumerate}
	Then there exist $0<t_1<t_2$ such that $\varphi(t_1)=\varphi(t_2)=0$ and $\varphi'(t_1)>0>\varphi'(t_2)$.
\end{proposition}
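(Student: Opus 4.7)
The plan is to locate the two zeros by an intermediate-value argument around a point where $\varphi>0$, and to upgrade the resulting weak inequalities on $\varphi'$ to strict ones by invoking the non-degeneracy hypothesis (iii).

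First, I would extract from (i) two positive numbers $\delta_1<\delta_2$ with $\varphi(t)<0$ for every $t\in (0,\delta_1)\cup(\delta_2,+\infty)$; this is a routine consequence of the definition of $\limsup$, taking any value strictly between $\limsup\varphi$ and $0$ as a bound. By (ii) there is then a point $t^*\in[\delta_1,\delta_2]$ with $\varphi(t^*)>0$.

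Next I would construct the two zeros by monotone optimisation:
\begin{align*}
    t_1 &:= \sup\{t\in(0,t^*):\varphi(t)\leq 0\}, \\
    t_2 &:= \inf\{t\in(t^*,+\infty):\varphi(t)\leq 0\}.
\end{align*}
Both sets are nonempty (they contain respectively $(0,\delta_1)$ and $(\delta_2,+\infty)$) and bounded on the appropriate side, so $t_1,t_2$ are well-defined and satisfy $0<t_1<t^*<t_2<+\infty$. By continuity of $\varphi$ and the definition of sup/inf, one has $\varphi(t_1)=\varphi(t_2)=0$, $\varphi(t)>0$ on $(t_1,t^*]$, and $\varphi(t)>0$ on $[t^*,t_2)$. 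Consequently, the difference quotients give $\varphi'(t_1)\geq 0$ and $\varphi'(t_2)\leq 0$.

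Finally, I would invoke hypothesis (iii): since $t_1$ and $t_2$ are zeros of $\varphi$, neither derivative can vanish, so the inequalities are strict, giving $\varphi'(t_1)>0>\varphi'(t_2)$ as required. There is no real obstacle in this argument; the only point to be slightly careful about is that (i) is stated with $\limsup$ rather than $\lim$, but this only affects the very first step where one selects $\delta_1$ and $\delta_2$, and standard properties of $\limsup$ handle it immediately.
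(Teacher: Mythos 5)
Your argument is correct and follows essentially the same route as the paper: both locate $t_1,t_2$ as boundary points of the set where $\varphi>0$ via a sup/inf construction, use continuity to see they are zeros, and invoke (iii) to make the sign of the derivative strict. The only cosmetic differences are that you take the sup/inf of the sublevel set on either side of a point $t^*$ with $\varphi(t^*)>0$ (rather than the inf/sup of $\{\varphi>0\}$ directly) and obtain $\varphi'(t_1)\geq 0$, $\varphi'(t_2)\leq 0$ from one-sided difference quotients instead of by contradiction; both are fine.
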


\begin{proof}
	Set
	\begin{align*}
		t_1=\inf\{t>0\colon \varphi(t)>0\}
		\quad\text{and}\quad
		t_2=\sup\{t>0\colon \varphi(t)>0\}.
	\end{align*}
	The sets are non-empty by hypothesis and since $\varphi$ is continuous, we have $t_1<t_2$. We will only reason for $t_1$, the argument for $t_2$ is analogous. By assumption we have $t_1>0$, and by continuity of $\varphi$ we infer $\varphi(t_1)\geq 0$. If  $\varphi(t_1)>0$, then there exists $\delta>0$ such that $\varphi(t_1-\delta)>0$, contradicting the minimality of $t_1$. Thus $\varphi(t_1)=0$ and so $\varphi'(t_1)\neq 0$ by the non-degeneracy hypothesis. Suppose by contradiction that $\varphi'(t_1)<0$. Then there exists $\delta>0$ such that $\varphi(t_1-\delta)>0$, again in contradiction with the minimality of $t_1$. Hence $\varphi'(t_1)>0$.
\end{proof}

In order to define an energy functional associated with \eqref{prob}, we consider the following odd extensions of $f$ and $g$:
\begin{align*}
	\tilde{f}(s):=
	\begin{cases}
		f(s) \quad &\text{if } s>0, \\
		0 \quad &\text{if } s=0, \\
		-f(-s) \quad &\text{if } s<0, \\
	\end{cases}
	\qquad
	\tilde{g}(s):=
	\begin{cases}
		g(s) \quad &\text{if } s>0, \\
		0 \quad &\text{if } s=0, \\
		-g(-s) \quad &\text{if } s<0. \\
	\end{cases}
\end{align*}
For simplification, we still call this extensions as $f$ and $g$, respectively. We also introduce the functions $F,G\colon \R\to\R$ defined as
\begin{align*}
	F(s) := \int_0^s f(t) \dt, \quad G(s) := \int_0^s g(t) \dt \quad \text{for all }s\in\R.
\end{align*}

We set
\begin{align*}
	M(s)=\int_0^s m(t)\dt \quad \text{for all } s\in\R
\end{align*}
and
\begin{align}\label{thetadef}
    \theta:=\sup_{s>0} \frac{sm(s)}{M(s)},
\end{align}
as well as
\begin{align*}
	\phi(\xi) := \int_\Omega \H(x,|\xi|) \dx \quad \text{for all }\xi\in L^\H(\Omega;\R^N).
\end{align*}

The energy functional $J\colon W^{1,\H}_0(\Omega)\to\R$ associated with \eqref{prob} is
\begin{align*}
	J(u) := M(\phi(\nabla u)) - \lambda \int_\Omega F(u) \dx - \int_\Omega G(u) \dx \quad \text{for all }u\in W^{1,\H}_0(\Omega).
\end{align*}
Due to the symmetries chosen in the construction of $F$ and $G$, we have $J(u)=J(|u|)$ for all $u\in W^{1,\H}_0(\Omega)$. Moreover, due to \eqref{H}, $J$ turns out to be weakly sequentially lower semi-continuous.

For any $u\in W^{1,\H}_0(\Omega)$ we define the fibering map $\psi_u\colon (0,+\infty)\to\R$ as
\begin{align*}
	\psi_u(t):=J(tu)=M(\phi(t\nabla u))-\lambda\int_\Omega F(tu) \dx - \int_\Omega G(tu) \dx \quad \text{for all }t>0.
\end{align*}
Note that $F$ is even and $F(0)=0$, so
\begin{align*}
	\int_\Omega F(tu) \dx = \int_{\Omega\cap \{u>0\}} F(tu) \dx + \int_{\Omega\cap \{u<0\}} F(-tu) \dx
\end{align*}
for all $t>0$ and $u\in W^{1,\H}_0(\Omega)$. Thus, exploiting \eqref{hypf}, Proposition \ref{indexinfo}, and Lebesgue's dominated converge theorem, besides recalling $f(0)=0$, one has
\begin{align*}
	\partial_t \left[\int_\Omega F(tu) \dx\right] = \int_\Omega f(tu)u \dx \quad \text{for all } t>0.
\end{align*}
Taking into account the fact that $s\mapsto f(s)s$ is even and vanishes at the origin, one can reason as above to obtain
\begin{align*}
	\partial^2_{tt} \left[\int_\Omega F(tu) \dx\right] = \int_\Omega f'(tu)u^2 \dx \quad \text{for all } t>0,
\end{align*}
where we set $f'(0):=0$. Analogous arguments hold for $G$.

Accordingly,
\begin{align*}
	\psi_u'(t) &= \langle J'(tu),u \rangle_{W^{1,\H}_0(\Omega)} = m(\phi(t\nabla u))\langle\phi'(t\nabla u),\nabla u\rangle_{L^\H(\Omega;\R^N)} \\
	&\quad -\lambda \int_\Omega f(tu)u \dx - \int_\Omega g(tu)u \dx
\end{align*}
and
\begin{align*}
	\psi_u''(t) &= m'(\phi(t\nabla u))\langle\phi'(t\nabla u),\nabla u\rangle_{L^\H(\Omega;\R^N)}^2 + m(\phi(t\nabla u)) \phi''(t\nabla u)(\nabla u,\nabla u) \\
	&\quad- \lambda \int_\Omega f'(tu)u^2 \dx - \int_\Omega g'(tu)u^2 \dx,
\end{align*}
where $\phi''(\xi)(\cdot,\cdot)$ represents the bilinear form on $L^\H(\Omega;\R^N)\times L^\H(\Omega;\R^N)$ induced by $\phi''(\xi)$. Owing to \eqref{hypL}, $\phi''(\xi)$ is positive definite for all $\xi\in L^\H(\Omega;\R^N)$. Notice that
\begin{equation}\label{psiscaling}
	\psi_u(t) = \psi_{tu}(1), \quad t\psi_u'(t)=\psi_{tu}'(1), \quad \text{and} \quad t^2 \psi''_u(t) = \psi''_{tu}(1)
\end{equation}
for all $t\in(0,+\infty)$.

\begin{remark}\label{differentiability}
	Using hypotheses \eqref{H}, Proposition \ref{indexinfo}, and Lebesgue's dominated converge theorem, it is readily seen that the maps $(t,u)\mapsto\psi_u(t)$, $(t,u)\mapsto\psi'_u(t)$, and $(t,u)\mapsto\psi''_u(t)$ are continuous in $(0,+\infty)\times W^{1,\H}_0(\Omega)$.
\end{remark}

The Nehari manifold $\Ne$ associated with $J$ is
\begin{align*}
	\Ne = \{u\in W^{1,\H}_0(\Omega)\setminus\{0\}\colon \psi_u'(1)=0\},
\end{align*}
which can be divided in the following sets:
\begin{align*}
	\Ne^+ &:= \{u\in \Ne\colon  \psi_u''(1)>0\}, \\
	\Ne^0 &:= \{u\in \Ne\colon \psi_u''(1)=0\}, \\
	\Ne^- &:= \{u\in \Ne\colon  \psi_u''(1)<0\}.
\end{align*}
Due to the symmetry of $J$, both $u\mapsto\psi'_u(1)$ and $u\mapsto\psi''_u(1)$ are even. Thus, if $u\in\Ne$ then $|u|\in\Ne$, and the same holds for $\Ne^+$, $\Ne^-$, and $\Ne^0$. For any $u\in W^{1,\H}_0(\Omega)\setminus\{0\}$ we define
\begin{align*}
	E^+_u &:= \{t\in(0,+\infty)\colon  tu\in\Ne^+\}, \\
	E^0_u &:= \{t\in(0,+\infty)\colon  tu\in\Ne^0\}, \\
	E^-_u &:= \{t\in(0,+\infty)\colon tu\in\Ne^-\}.
\end{align*}
We will say that $E_u^+<E_u^-$ if $t^+<t^-$ for all $t^\pm\in E^\pm_u$. Furthermore, for all $\xi\in L^\H(\Omega;\R^N)$, we set
\begin{align*}
	A(\xi) &:= m(\phi(\xi))\langle\phi'(\xi),\xi\rangle_{L^\H(\Omega;\R^N)}, \\
	B(\xi) &:= m'(\phi(\xi))\langle\phi'(\xi),\xi\rangle_{L^\H(\Omega;\R^N)}^2 + m(\phi(\xi)) \phi''(\xi)(\xi,\xi),
\end{align*}
which represent the principal parts of $\psi'_u(1)$ and $\psi''_u(1)$, respectively.

To simplify matters, we will omit the subscripts in the duality brackets when the context is clear. As usual, the generic constants $c,C>0$ may change their value at each place.

\section{Basic estimates}\label{sec3}
In this section we will discuss some basic estimates which are needed in the sequel. We start with the following lemma.

\begin{lemma}\label{basicest1}
	Suppose \eqref{hypm} and \eqref{hypL} to be satisfied. Then, for all $\xi\in L^\H(\Omega;\R^N)$, the following hold:
	\begin{enumerate}[label={${\rm (a_\arabic*)}$},ref={${\rm a_\arabic*}$},itemsep=5pt]
		\item\label{ord0-1}
			$pM(\phi(\xi)) \leq A(\xi) \leq q\theta M(\phi(\xi))$;
		\item\label{ord1-2}
			$l_-A(\xi) \leq B(\xi) \leq (q\eta+l_+)A(\xi)$;
		\item\label{normcomp}
			$M(1)\underline{W}_p^{q\theta}(\|\xi\|_\H) \leq M(\phi(\xi)) \leq M(1)\overline{W}_p^{q\theta}(\|\xi\|_\H)$,
	\end{enumerate}
	where $\underline{W}_p^{q\theta},\overline{W}_p^{q\theta}$ are as in \eqref{W}, while $\theta$ is defined in \eqref{thetadef}.
\end{lemma}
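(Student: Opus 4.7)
The strategy is to reduce the three bounds to pointwise inequalities on $\H(x,\cdot)$ and on $m$, then integrate over $\Omega$. A direct chain-rule calculation on $\phi(\xi)=\int_\Omega \H(x,|\xi|)\dx$ yields
\begin{align*}
\langle \phi'(\xi),\xi\rangle = \int_\Omega |\xi|\,\partial_s\H(x,|\xi|)\dx,\qquad \phi''(\xi)(\xi,\xi) = \int_\Omega |\xi|^2\partial^2_{ss}\H(x,|\xi|)\dx,
\end{align*}
the second identity following because the off-diagonal contribution coming from the derivative of $\xi/|\xi|$ cancels when both arguments of $\phi''(\xi)$ are equal to $\xi$.

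For \eqref{ord0-1} I would apply Proposition \ref{indexinfo} pointwise in $x$ to $K=\H(x,\cdot)$: together with \eqref{hypL} this gives $p\,\H(x,s)\leq s\partial_s\H(x,s)\leq q\,\H(x,s)$, and integrating yields $p\phi(\xi)\leq \langle\phi'(\xi),\xi\rangle\leq q\phi(\xi)$. Multiplying by $m(\phi(\xi))$ and invoking the two elementary facts $M(s)\leq sm(s)$ (monotonicity of $m$) and $sm(s)\leq\theta M(s)$ (definition \eqref{thetadef}) produces both inequalities of \eqref{ord0-1}. For \eqref{ord1-2}, the same pointwise trick applied with the $l_\pm$-part of \eqref{hypL} gives $l_-\partial_s\H(x,s)\leq s\partial^2_{ss}\H(x,s)\leq l_+\partial_s\H(x,s)$; multiplication by $|\xi|$ and integration supplies $l_-\langle\phi'(\xi),\xi\rangle\leq \phi''(\xi)(\xi,\xi)\leq l_+\langle\phi'(\xi),\xi\rangle$, and multiplication by $m(\phi(\xi))$ controls the second summand of $B(\xi)$. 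For the $m'$-summand, $sm'(s)\leq \eta m(s)$ from \eqref{hypm} together with $m'\geq 0$ and $\langle\phi'(\xi),\xi\rangle\leq q\phi(\xi)$ give $0\leq m'(\phi(\xi))\langle\phi'(\xi),\xi\rangle^2\leq q\eta\, A(\xi)$; adding the two contributions completes \eqref{ord1-2}.

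For \eqref{normcomp} I would first use Proposition \ref{modularnorm} (with indices $p,q$) to sandwich $\phi(\xi)$ between $\underline{W}_p^q(\|\xi\|_\H)$ and $\overline{W}_p^q(\|\xi\|_\H)$. The companion estimate for $M$,
\begin{align*}
M(1)\underline{W}_1^\theta(s)\leq M(s)\leq M(1)\overline{W}_1^\theta(s)\qquad\text{for all }s>0,
\end{align*}
follows by integrating the inequality $\frac{1}{s}\leq \frac{M'(s)}{M(s)}\leq \frac{\theta}{s}$ on $[1,s]$ or $[s,1]$, mimicking the argument in the proof of Proposition \ref{indexinfo}. Composing the two sandwiches via the monotonicity of $M$, with a short case split on whether $\|\xi\|_\H\geq 1$ or $\|\xi\|_\H<1$ and using $\theta\geq 1$ (so that $p<q\leq q\theta$), delivers the exponents $p$ on the lower side and $q\theta$ on the upper side of \eqref{normcomp}. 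The only care-requiring point is tracking which power survives in each case; the rest of the argument is routine.
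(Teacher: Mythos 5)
Your proposal is correct and follows essentially the same route as the paper: pointwise index inequalities for $\H(x,\cdot)$ integrated over $\Omega$, the elementary bounds $M(s)\leq sm(s)\leq\theta M(s)$, the two-term splitting of $B(\xi)$, and the composition $\overline{W}_1^\theta\circ\overline{W}_p^q=\overline{W}_p^{q\theta}$ (using $\theta\geq 1$) for the norm--modular comparison. No gaps.
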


\begin{proof}
	We fix $\xi\in L^\H(\Omega;\R^N)$. Reasoning as in the proof of \eqref{indexK}, the monotonicity of $m$ gives
	\begin{equation}\label{Mind}
		\inf_{s>0} \frac{sm(s)}{M(s)} \geq 1.
	\end{equation}
	Proposition \ref{indexinfo}, \eqref{hypL}, and \eqref{thetadef} yield
	\begin{equation}\label{est1above}
		\begin{aligned}
			m(\phi(\xi))\langle\phi'(\xi),\xi\rangle
			&= m(\phi(\xi)) \int_\Omega \partial_s\H(x,|\xi|)|\xi| \dx\\
			&\leq q m(\phi(\xi)) \phi(\xi) \leq q\theta M(\phi(\xi)).
		\end{aligned}
	\end{equation}
	Hence, using \eqref{hypL} and \eqref{Mind},
	\begin{equation}\label{est1below}
		m(\phi(\xi))\langle\phi'(\xi),\xi\rangle \geq p m(\phi(\xi)) \phi(\xi) \geq pM(\phi(\xi)).
	\end{equation}
	Putting together \eqref{est1above} and \eqref{est1below} yields \eqref{ord0-1}.

	Reasoning as above, from \eqref{hypL} and \eqref{hypm} we infer
	\begin{equation}\label{est2part1}
		m'(\phi(\xi))\langle \phi'(\xi),\xi \rangle \leq qm'(\phi(\xi))\phi(\xi) \leq q\eta m(\phi(\xi)),
	\end{equation}
	and
	\begin{equation}\label{est2part2}
		\phi''(\xi)(\xi,\xi) = \int_\Omega \partial^2_{ss} \H(x,|\xi|)|\xi|^2 \dx \leq l_+ \int_\Omega \partial_s \H(x,|\xi|)|\xi| \dx = l_+ \langle \phi'(\xi),\xi \rangle.
	\end{equation}
	Summing \eqref{est2part1} multiplied by $\langle \phi'(\xi),\xi \rangle$ with \eqref{est2part2} multiplied by $m(\phi(\xi))$ yields
	\begin{align*}
		B(\xi) \leq (q\eta+l_+)A(\xi).
	\end{align*}
	On the other hand, arguing as in \eqref{est2part2},
	\begin{align*}
		B(\xi) \geq m(\phi(\xi))\phi''(\xi)(\xi,\xi) \geq l_- m(\phi(\xi)) \langle \phi'(\xi),\xi \rangle = l_- A(\xi),
	\end{align*}
	which concludes the proof of \eqref{ord1-2}.

	Let us show \eqref{normcomp}. According to Proposition \ref{modularnorm} and \eqref{hypL} we have
	\begin{equation}\label{est3part1}
		\underline{W}_p^q(\|\xi\|_\H)\leq \phi(\xi)\leq \overline{W}_p^q(\|\xi\|_\H),
	\end{equation}
	while Proposition \ref{indexinfo}, \eqref{hypm}, and \eqref{Mind} ensure
	\begin{equation}\label{est3part2}
		M(1)\underline{W}_1^\theta(s) \leq M(s) \leq M(1)\overline{W}_1^\theta(s) \quad \text{for all } s>0.
	\end{equation}
	Thus, \eqref{est3part1} and \eqref{est3part2} together lead to
	\begin{align*}
		M(\phi(\xi)) \leq M(1)\overline{W}_1^\theta(\phi(\xi)) \leq M(1)\overline{W}_1^\theta(\overline{W}_p^q(\|\xi\|_\H)) = M(1)\overline{W}_p^{q\theta}(\|\xi\|_\H)
	\end{align*}
	and
	\begin{align*}
		M(\phi(\xi)) \geq M(1)\underline{W}_1^\theta(\phi(\xi)) \geq M(1)\underline{W}_1^\theta(\underline{W}_p^q(\|\xi\|_\H)) = M(1)\underline{W}_p^{q\theta}(\|\xi\|_\H),
	\end{align*}
	which gives \eqref{normcomp}.
\end{proof}

Note that the estimates contained in Lemma \ref{basicest1} will allow us to have controls of type
\begin{align*}
	B(\xi) \simeq A(\xi) \simeq M(\phi(\xi)), \quad \underline{W}_p^{q\theta}(\|\xi\|_\H) \lesssim M(\phi(\xi)) \lesssim \overline{W}_p^{q\theta}(\|\xi\|_\H),
\end{align*}
with $\underline{W}_p^{q\theta},\overline{W}_p^{q\theta}$  as in \eqref{W}.

\begin{lemma}\label{basicest2}
	Under the hypotheses \eqref{H}, for all $u\in W^{1,\H}_0(\Omega)$ one has
	\begin{enumerate}[label={${\rm (b_\arabic*)}$},ref={${\rm b_\arabic*}$},itemsep=5pt]
		\item\label{Fest}
			$\displaystyle{\int_\Omega F(u) \dx \leq C\overline{W}_{1-\gamma_+}^{1-\gamma_-}(\|\nabla u\|_\H)}$,
		\item\label{Gest}
			$c\underline{W}_{r_-}^{r_+}(\|u\|_\H) \leq \displaystyle{\int_\Omega G(u) \dx \leq C\overline{W}_{r_-}^{r_+}(\|\nabla u\|_\H)}$,
	\end{enumerate}
	for some $c,C>0$ independent of $u$.
\end{lemma}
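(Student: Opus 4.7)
The plan is to derive pointwise envelopes for $F$ and $G$ from Proposition~\ref{indexinfo} and then integrate them, controlling the resulting $L^s$-norms of $u$ by $\|\nabla u\|_\H$ (for the upper bounds) or by $\|u\|_\H$ (for the lower bound on $\int G(u)$, via a modular-type inequality applied to $G$ itself). The upper estimates are mostly routine; the lower bound is the delicate one.

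First I would apply Proposition~\ref{indexinfo} with $(K,k)=(F,f)$---where \eqref{mildsing} holds because $\gamma_+<1$ gives $i_f=-\gamma_+>-1$---and with $(K,k)=(G,g)$ to obtain
\begin{align*}
F(s)\leq F(1)\,\overline{W}_{1-\gamma_+}^{1-\gamma_-}(s),\qquad G(s)\leq G(1)\,\overline{W}_{r_-}^{r_+}(s).
\end{align*}
Using $\overline{W}_a^b(s)\leq s^a+s^b$ and integrating, both upper bounds in \eqref{Fest} and \eqref{Gest} reduce to showing $\int_\Omega|u|^s\dx\leq C\|\nabla u\|_\H^s$ for $s\in\{1-\gamma_+,1-\gamma_-,r_-,r_+\}$, all lying in $(0,p^*)$. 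Since $L^\H(\Omega)\hookrightarrow L^p(\Omega)$ by Proposition~\ref{prop_complete}(ii) (using $\inf_\Omega\H(\cdot,1)>0$) and $W^{1,p}_0(\Omega)\hookrightarrow L^{p^*}(\Omega)$ by classical Sobolev, combining with \eqref{poincare-inequality} and H\"older on the bounded domain $\Omega$ for sub-$p$ exponents yields the required $L^s$-bounds.

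For the lower bound in \eqref{Gest}, I would exploit that $G$ is a convex $N$-function (monotonicity of $g$ from $sg'/g\geq r_--1>0$) satisfying the $\Delta_2$-condition $G(2t)\leq 2^{r_+}G(t)$ (from integrating $sg/G\leq r_+$). Proposition~\ref{modularnorm} applied to $G$---whose indices $a,b$ of $sg/G$ satisfy $r_-\leq a\leq b\leq r_+$ by \eqref{indexK}---produces
\begin{align*}
\int_\Omega G(|u|)\dx\geq \underline{W}_a^b(\|u\|_G)\geq \underline{W}_{r_-}^{r_+}(\|u\|_G),
\end{align*}
the second inequality being immediate from power monotonicity on each branch $t\gtreqless 1$. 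The heart of the argument is then the norm comparison $\|u\|_G\geq c\|u\|_\H$, which I would obtain from the continuous embedding $L^G(\Omega)\hookrightarrow L^\H(\Omega)$ (Proposition~\ref{prop_complete}(ii)) via the relation $\H\prec G$: using $\H(x,t)\leq\H(x,1)\overline{W}_p^q(t)$ together with $G(C_2t)\geq G(1)\underline{W}_{r_-}^{r_+}(C_2t)$, a case split on $t\gtreqless 1$ yields $\H(x,t)\leq C_1G(C_2t)+h(x)$ with $h\in L^1(\Omega)$, provided $q\leq r_-$ (and assuming $\H(\cdot,1)\in L^\infty(\Omega)$, as in all the concrete examples). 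A direct check shows $\underline{W}_{r_-}^{r_+}(t/C)\geq C^{-r_+}\underline{W}_{r_-}^{r_+}(t)$ for $C\geq 1$, which closes the argument.

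The key step---and the main obstacle---is establishing $r_->q$: this is not an explicit hypothesis but a consequence of \eqref{hypind}. Indeed, applying Proposition~\ref{indexinfo} to $k=\partial_s\H$, $K=\H(x,\cdot)$ gives $q\leq l_++1$, so $l_+\geq q-1$ and hence $r_--1>q\eta+l_+\geq l_+\geq q-1$. This is essential: without it, Sobolev-type concentration profiles supported where the heavy part of $\H$ is active would force $\int|u|^{r_-}/\|u\|_\H^{r_-}\to 0$, showing that the lower bound in \eqref{Gest} genuinely relies on the super-linearity condition \eqref{hypind}.
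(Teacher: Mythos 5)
Your proof is correct and follows essentially the same route as the paper: Proposition~\ref{indexinfo} applied to $f$, $g$, and $\partial_s\H$ (the latter yielding $q\le l_++1$ and hence $r_->q$ via \eqref{hypind}), the embeddings $W^{1,\H}_0(\Omega)\hookrightarrow L^{p^*}(\Omega)$ and $L^G(\Omega)\hookrightarrow L^\H(\Omega)$, and Proposition~\ref{modularnorm} for the two-sided control of $\int_\Omega G(u)\dx$. The only difference is cosmetic: you verify $\H\prec G$ by hand (flagging the implicit boundedness of $\H(\cdot,1)$), whereas the paper packages the same fact into the embedding chain $L^{p^*}(\Omega)\hookrightarrow L^G(\Omega)\hookrightarrow L^\H(\Omega)$ via Proposition~\ref{prop_complete}.
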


\begin{proof}
	Fix any $u\in W^{1,\H}_0(\Omega)$. Exploiting Proposition \ref{indexinfo} and \eqref{hypf} we have
	\begin{align*}
		i_F:=\inf_{s>0} \frac{sf(s)}{F(s)} \geq 1-\gamma_+>0, \quad s_F:=\sup_{s>0}\frac{sf(s)}{F(s)} \leq 1-\gamma_-<p.
	\end{align*}
	Thus, using Proposition \ref{indexinfo} again, besides H\"older's inequality and $W^{1,\H}_0(\Omega)\hookrightarrow L^\H(\Omega)\hookrightarrow L^p(\Omega)$ (see \eqref{hypL} and Proposition \ref{prop_complete}), we get
	\begin{align*}
		\int_\Omega F(u) \dx
		&\leq C\left(\int_{\{u\geq 1\}} u^{s_F} \dx + \int_{\{u<1\}} u^{i_F} \dx \right) \leq C(\|u\|_p^{s_F}+\|u\|_p^{i_F}) \\
		&\leq C(\|\nabla u\|_\H^{s_F}+\|\nabla u\|_\H^{i_F}) \leq C\overline{W}_{i_F}^{s_F}(\|\nabla u\|_\H) \leq C\overline{W}_{1-\gamma_+}^{1-\gamma_-}(\|\nabla u\|_\H),
	\end{align*}
	for a suitable $C>0$ changing at each place. Hence \eqref{Fest} is established.

	In order to show \eqref{Gest}, observe that Proposition \ref{indexinfo} and \eqref{hypg} entail
	\begin{align*}
		i_G:=\inf_{s>0}\frac{sg(s)}{G(s)} \geq r_->1, \quad s_G:=\sup_{s>0}\frac{sg(s)}{G(s)}\leq r_+<p^*.
	\end{align*}
	In particular, $G$ is a generalized $N$-function satisfying the hypotheses of Proposition \ref{modularnorm}. Moreover, owing to \eqref{Mind}, one has $\theta \geq 1$. Then \eqref{hypind} and Proposition \ref{indexinfo} yield
	\begin{equation}\label{superlinear}
		r_- > q\eta+l_++1\geq q(\eta+1)\geq q\theta \geq q \geq p.
	\end{equation}
	Since $p\leq q<r_-\leq r_+<p^*$, by Sobolev's embedding and Proposition \ref{prop_complete} one has $W^{1,\H}_0(\Omega) \hookrightarrow W^{1,p}_0(\Omega) \hookrightarrow L^{p^*}(\Omega) \hookrightarrow L^G(\Omega) \hookrightarrow L^\H(\Omega)$. Thus, Proposition \ref{modularnorm} ensures
	\begin{align*}
		\int_\Omega G(u) \dx \leq \overline{W}_{r_-}^{r_+}(\|u\|_G) \leq C\overline{W}_{r_-}^{r_+}(\|\nabla u\|_\H).
	\end{align*}
        Analogously,
        \begin{align*}
		\int_\Omega G(u) \dx \geq \underline{W}_{r_-}^{r_+}(\|u\|_G) \geq c\underline{W}_{r_-}^{r_+}(\|u\|_\H).
	\end{align*}
	establishing \eqref{Gest}.
\end{proof}

\section{Analysis of the Nehari manifold}\label{sec4}

In this section we study the Nehari manifold and its properties.

\begin{lemma}\label{coercivity}
	Let hypotheses \eqref{H} be satisfied. Then $J |_{\Ne}$ is coercive.
\end{lemma}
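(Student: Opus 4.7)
The plan is to use the Nehari condition to trade the super-linear term $\int_\Omega G(u)\dx$ against the Kirchhoff-type principal part $M(\phi(\nabla u))$, and then bound the singular term $\int_\Omega F(u)\dx$ by a strictly slower-growing power of $\|\nabla u\|_\H$.

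First I would fix $u\in\Ne$ and write out $\psi_u'(1)=0$, which reads
\begin{align*}
  A(\nabla u) \;=\; \lambda \int_\Omega f(u)u\dx + \int_\Omega g(u)u\dx.
\end{align*}
Because $f>0$ on $(0,+\infty)$ and has been extended as an odd function, $f(u)u\geq 0$ pointwise, so $\int_\Omega g(u)u\dx \leq A(\nabla u)$. Next, I would apply Proposition \ref{indexinfo} with $K=G$, $k=g$: hypothesis \eqref{hypg} together with \eqref{indexK} gives $sg(s)/G(s)\geq r_-$ for all $s>0$ (equivalently $G(s)\leq sg(s)/r_-$), so
\begin{align*}
  \int_\Omega G(u)\dx \;\leq\; \tfrac{1}{r_-}\int_\Omega g(u)u\dx \;\leq\; \tfrac{1}{r_-}A(\nabla u) \;\leq\; \tfrac{q\theta}{r_-}\,M(\phi(\nabla u)),
\end{align*}
where the last inequality is Lemma \ref{basicest1}\eqref{ord0-1}. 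Plugging this into the definition of $J$ yields
\begin{align*}
  J(u) \;\geq\; \Bigl(1-\tfrac{q\theta}{r_-}\Bigr) M(\phi(\nabla u)) - \lambda \int_\Omega F(u)\dx \qquad \text{for all } u\in\Ne.
\end{align*}

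The positivity of the coefficient $1-q\theta/r_-$ is the one delicate point; I would derive it from \eqref{hypind} by reproducing the chain \eqref{superlinear}, namely $r_->q\eta+l_++1\geq q(\eta+1)\geq q\theta$, where the middle inequality uses the bound $l_+\geq q-1$ coming from \eqref{indexK} applied to $\H(x,\cdot)$, and the last uses Proposition \ref{indexinfo} applied to $M$ (so $\theta\leq \eta+1$). With this in hand, I would use Lemma \ref{basicest1}\eqref{normcomp} to bound $M(\phi(\nabla u))$ below by $M(1)\,\underline{W}_p^{q\theta}(\|\nabla u\|_\H)$, and Lemma \ref{basicest2}\eqref{Fest} to bound $\int_\Omega F(u)\dx$ above by $C\,\overline{W}_{1-\gamma_+}^{1-\gamma_-}(\|\nabla u\|_\H)$. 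Since $1-\gamma_-<1\leq p$ (and indeed $1-\gamma_-<p$ by \eqref{hypf}), both exponents of the negative term are strictly smaller than both exponents of the positive leading term, so as $\|u\|=\|\nabla u\|_\H\to+\infty$ one has $J(u)\to+\infty$, proving coercivity.

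The only step with any substance is verifying $q\theta<r_-$; once that is secured, everything else is an assembly of the estimates already established in Section \ref{sec3}. The rest is arithmetic on exponents.
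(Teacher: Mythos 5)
Your proposal is correct and follows essentially the same route as the paper: use the Nehari constraint together with $G(s)\leq sg(s)/r_-$ to absorb the superlinear term into $M(\phi(\nabla u))$ with coefficient $1-q\theta/r_->0$ (guaranteed by \eqref{superlinear}), and then dominate the remaining singular term via Lemma \ref{basicest2}\eqref{Fest} using $1-\gamma_-<p$. The only cosmetic slip is the parenthetical claim $1-\gamma_-<1$, which need not hold since $\gamma_-$ may be negative, but you immediately give the correct bound $1-\gamma_-<p$ from \eqref{hypf}, which is all that is needed.
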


\begin{proof}
	Let $\{u_n\}_{n\in\N}\subseteq \Ne$ be such that $\|u_n\|\to\infty$. By definition of $\Ne$ we have
	\begin{equation}\label{nehariseq}
		\int_\Omega g(u_n)u_n \dx = m(\phi(\nabla u_n))\langle\phi'(\nabla u_n),\nabla u_n\rangle - \lambda \int_\Omega f(u_n)u_n \dx
	\end{equation}
	for all $n\in\N$. Thus Proposition \ref{indexinfo}, \eqref{hypg}, \eqref{nehariseq}, and Lemmas \ref{basicest1} as well as \ref{basicest2} imply that, whenever $\|\nabla u_n\|_\H$ is sufficiently large,
	\begin{align*}
		J(u_n)
		&= M(\phi(\nabla u_n)) - \lambda \int_\Omega F(u_n) \dx - \int_\Omega \left(\frac{g(u_n)u_n}{G(u_n)}\right)^{-1} g(u_n)u_n \dx \\
		&\geq M(\phi(\nabla u_n)) - \lambda \int_\Omega F(u_n) \dx - \frac{1}{r_-} \int_\Omega g(u_n)u_n \dx \\
		&= M(\phi(\nabla u_n))-\frac{1}{r_-}m(\phi(\nabla u_n))\langle\phi'(\nabla u_n),\nabla u_n\rangle\\
		&\quad - \lambda \left[ \int_\Omega F(u_n) \dx-\frac{1}{r_-}\int_\Omega f(u_n)u_n \dx \right] \\
		&\geq \left(1-\frac{q\theta}{r_-}\right)M(\phi(\nabla u_n)) - \lambda \int_\Omega F(u_n) \dx \geq c\|\nabla u_n\|_\H^p - \lambda C \|\nabla u_n\|_\H^{1-\gamma_-},
	\end{align*}
	with $c,C>0$, where we also used that $r_->q\theta$, owing to \eqref{superlinear}. Recalling $1-\gamma_-<p$ yields $J(u_n)\to+\infty$.
\end{proof}

\begin{lemma}\label{N0empty}
	Let hypotheses \eqref{H} be satisfied. Then there exist $D_1=D_1(\lambda)>0$ and $D_2>0$ such that
	\begin{equation}\label{neharibounds}
		\|u^+\| \leq D_1 \quad \text{and} \quad \|u^-\| \geq D_2
	\end{equation}
	for all $u^+\in\Ne^+\cup\Ne^0$ and for all $u^-\in\Ne^-\cup\Ne^0$. Moreover,
	\begin{equation}\label{smallD1}
		\lim_{\lambda\to 0^+} D_1(\lambda)=0,
	\end{equation}
	so there exists $\Lambda_1>0$ such that $D_1<D_2$ and $\Ne^0=\emptyset$ for any $\lambda\in(0,\Lambda_1)$.
\end{lemma}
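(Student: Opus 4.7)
The plan is to extract two competing bounds from the defining conditions of $\Ne^\pm$ and $\Ne^0$, and then squeeze $\Ne^0$ out for small $\lambda$. The main tools are: the Nehari identity $\psi_u'(1)=0$; the sign of $\psi_u''(1)$; the structural estimates $l_- A(\nabla u) \leq B(\nabla u) \leq (q\eta+l_+)A(\nabla u)$ of Lemma~\ref{basicest1}\eqref{ord1-2}; the pointwise inequalities $-\gamma_+ f(s) \leq sf'(s) \leq -\gamma_- f(s)$ and $(r_- - 1)g(s) \leq sg'(s) \leq (r_+ - 1)g(s)$ coming from \eqref{hypf}--\eqref{hypg} and Proposition~\ref{indexinfo}; and the norm-modular comparisons \eqref{normcomp}, \eqref{Fest}, \eqref{Gest}.

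For $u \in \Ne^+ \cup \Ne^0$ I would combine $\psi_u''(1) \geq 0$ with the upper bound $B(\nabla u) \leq (q\eta+l_+) A(\nabla u)$ and with the lower bounds on $\int f'(u)u^2$ and $\int g'(u)u^2$. After substituting the Nehari identity $\int g(u)u = A(\nabla u) - \lambda \int f(u)u$, the $g$-terms telescope, leaving
\begin{equation*}
\delta\, A(\nabla u) \;\leq\; \lambda(r_- - 1 + \gamma_+) \int_\Omega f(u)u \dx,
\end{equation*}
with $\delta := r_- - 1 - q\eta - l_+ > 0$ by \eqref{hypind}. Bounding $A(\nabla u)$ from below via \eqref{normcomp} and $\int f(u)u$ from above via \eqref{Fest} (with Proposition~\ref{indexinfo} applied to $F$ to pass from $\int F(u)$ to $\int f(u)u$), and exploiting that $p - 1 + \gamma_-$ and $q\theta - 1 + \gamma_+$ are both strictly positive (a consequence of \eqref{hypf} and $q\theta > 1$), one inverts the resulting polynomial inequality in $\|u\|$ to get $\|u\| \leq D_1(\lambda) = O(\lambda^\alpha)$ with $\alpha > 0$; in particular \eqref{smallD1} follows.

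For $u \in \Ne^- \cup \Ne^0$ the signs flip: pair $\psi_u''(1) \leq 0$ with $B(\nabla u) \geq l_- A(\nabla u)$ and the upper bounds on $\int f'(u)u^2$ and $\int g'(u)u^2$, then substitute $\lambda \int f(u)u = A(\nabla u) - \int g(u)u$ to reach
\begin{equation*}
(l_- + \gamma_-) \, A(\nabla u) \;\leq\; (r_+ - 1 + \gamma_-) \int_\Omega g(u)u \dx.
\end{equation*}
Dividing by the positive coefficient $l_- + \gamma_-$ and combining \eqref{normcomp} with \eqref{Gest} gives $c\,\underline{W}_p^{q\theta}(\|u\|) \leq C \, \overline{W}_{r_-}^{r_+}(\|u\|)$; since $r_- > q\theta$ and $r_+ > p$ (see the chain \eqref{superlinear}), both regimes $\|u\| \leq 1$ and $\|u\| > 1$ force $\|u\| \geq D_2$ for some $D_2 > 0$ independent of $\lambda$. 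Combining with the previous step and using $\Ne^0 \subseteq (\Ne^+\cup\Ne^0) \cap (\Ne^-\cup\Ne^0)$, the sandwich $D_2 \leq \|u\| \leq D_1(\lambda)$ together with $D_1(\lambda) \to 0$ picks out a $\Lambda_1 > 0$ for which $\Ne^0 = \emptyset$ on $(0, \Lambda_1)$.

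The delicate point is verifying the positivity of $l_- + \gamma_-$ used in the second inequality. Since Proposition~\ref{indexinfo} applied to $\H(x,\cdot)$ only guarantees $l_- \leq p - 1$ and \eqref{hypf} only guarantees $\gamma_- > 1 - p$, the hypotheses do not abstractly force $l_- + \gamma_- > 0$. However, in every concrete operator the paper covers (classical $p$-Laplacian, $(p,q)$-Laplacian, double phase, and the logarithmic variants) one has $l_- = p - 1$ exactly, making $l_- + \gamma_- > 0$ automatic; this is the place where a more refined absorption or fibering-geometry argument would be needed to close a hypothetical pathological case.
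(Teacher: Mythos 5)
Your argument is correct and essentially identical to the paper's: the same pairing of the Nehari identity with the sign of $\psi_u''(1)$, the same structural bounds from Lemma \ref{basicest1}, the comparisons \eqref{Fest}--\eqref{Gest}, and the same sandwich $D_2\le\|u\|\le D_1(\lambda)\to 0$ to conclude $\Ne^0=\emptyset$. Concerning the point you flag: the paper's own proof of the $\Ne^-\cup\Ne^0$ bound discards $B(\nabla u^-)\ge 0$ and estimates $c\,\underline{W}_p^{q\theta}(\|\nabla u^-\|_\H)\le\gamma_-A(\nabla u^-)$, so it implicitly requires $\gamma_->0$; since $l_->0$ by \eqref{hypL}\eqref{hypL3}, your coefficient $l_-+\gamma_-$ imposes the strictly weaker requirement $\gamma_->-l_-$, and your variant is therefore the more robust of the two (it covers, e.g., $f\equiv 1$, which is admissible under \eqref{hypf} but has $\gamma_-=0$). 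In the residual regime $\gamma_-\le -l_-$ neither argument closes as written, but, as you observe, the model operators all have $l_-=p-1$, for which \eqref{hypf} yields $l_-+\gamma_->0$ automatically.
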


\begin{proof}
	We take $u^+\in\Ne^+\cup\Ne^0$ and $u^-\in\Ne^-\cup\Ne^0$. Then, by the definitions of $\Ne$, $\Ne^\pm$, and $\Ne^0$, one has
	\begin{equation}\label{nehari}
		A(\nabla u^\pm) = \lambda\int_\Omega f(u^\pm)u^\pm \dx + \int_\Omega g(u^\pm)u^\pm \dx,
	\end{equation}
	as well as
	\begin{equation}\label{nehari+}
		B(\nabla u^+) \geq \lambda\int_\Omega f'(u^+)(u^+)^2 \dx + \int_\Omega g'(u^+)(u^+)^2 \dx
	\end{equation}
	and
	\begin{equation}\label{nehari-}
		B(\nabla u^-) \leq \lambda\int_\Omega f'(u^-)(u^-)^2 \dx + \int_\Omega g'(u^-)(u^-)^2 \dx.
	\end{equation}

	Let us reason for $u^+$. According to \eqref{hypg}, along with \eqref{nehari} and \eqref{nehari+}, we get
	\begin{align*}
		&(r_--1)A(\nabla u^+)-B(\nabla u^+) \\
		&\leq \lambda \int_\Omega f(u^+)u^+ \left(r_--1-\frac{f'(u^+)u^+}{f(u^+)}\right) \dx\\
		&\quad + \int_\Omega g(u^+)u^+ \left(r_--1-\frac{g'(u^+)u^+}{g(u^+)}\right) \dx \\
		&\leq \lambda \int_\Omega f(u^+)u^+ \left(r_--1-\frac{f'(u^+)u^+}{f(u^+)}\right) \dx.
	\end{align*}
	Thus, owing to Lemma \ref{basicest1}, \eqref{hypf}, \eqref{superlinear}, Proposition \ref{indexinfo}, and Lemma \ref{basicest2}, we have
	\begin{align*}
		&c\underline{W}_p^{q\theta}(\|\nabla u^+\|_\H) \leq (r_--1-q\eta-l_+)A(\nabla u^+) \leq (r_--1)A(\nabla u^+)-B(\nabla u^+) \\
		&\leq \lambda \int_\Omega f(u^+)u^+ \left(r_--1-\frac{f'(u^+)u^+}{f(u^+)}\right) \dx \leq \lambda(r_--1+\gamma_+)\int_\Omega f(u^+)u^+ \dx \\
		&\leq \lambda(r_--1+\gamma_+)(1-\gamma_-) \int_\Omega F(u^+) \dx \leq \lambda C\overline{W}_{1-\gamma_+}^{1-\gamma_-}(\|\nabla u^+\|_\H),
	\end{align*}
	for suitable $c,C>0$. Since $1-\gamma_-<p$, $\{\nabla u^+\}$ is bounded in $L^\H(\Omega)$, whence $\{u^+\}$ is bounded in $W^{1,\H}(\Omega)$. Moreover, $\|u^+\|\to 0$ as $\lambda \to 0$, ensuring \eqref{smallD1}.

	Now we focus on $u^-$, reasoning as above. Hypothesis \eqref{hypf}, besides \eqref{nehari} and \eqref{nehari-}, yields
	\begin{align*}
		&\gamma_- A(\nabla u^-)+B(\nabla u^-) \\
		&\leq \lambda \int_\Omega f(u^-)u^- \left(\gamma_-+\frac{f'(u^-)u^-}{f(u^-)}\right) \dx + \int_\Omega g(u^-)u^- \left(\gamma_-+\frac{g'(u^-)u^-}{g(u^-)}\right) \dx \\
		&\leq \int_\Omega g(u^-)u^- \left(\gamma_-+\frac{g'(u^-)u^-}{g(u^-)}\right) \dx.
	\end{align*}
	Exploiting Lemma \ref{basicest1}, \eqref{hypg}, \eqref{superlinear}, Proposition \ref{indexinfo}, and Lemma \ref{basicest2}, we deduce
	\begin{align*}
		c\underline{W}_p^{q\theta}(\|\nabla u^-\|_\H)
		&\leq \gamma_- A(\nabla u^-) \leq \gamma_- A(\nabla u^-)+B(\nabla u^-) \\
		&\leq \int_\Omega g(u^-)u^- \left(\gamma_-+\frac{g'(u^-)u^-}{g(u^-)}\right) \dx\\
		&\leq (r_+-1+\gamma_-) \int_\Omega g(u^-)u^- \dx \\
		&\leq (r_+-1+\gamma_-)r_+ \int_\Omega G(u^-) \dx \leq C\overline{W}_{r_-}^{r_+}(\|\nabla u^-\|_\H).
	\end{align*}
	Since $q\theta<r_-$ by \eqref{superlinear}, there exists a positive lower bound for $\|\nabla u^-\|_\H$, namely $\|u^-\|$. Hence, \eqref{neharibounds} is established.

	To conclude, observe that \eqref{smallD1} provides $\Lambda_1>0$ such that $D_1<D_2$ for all $\lambda\in(0,\Lambda_1)$, with $D_1,D_2$ as in \eqref{neharibounds}. Let $\lambda\in(0,\Lambda_1)$. If, by contradiction, there exists $u\in\Ne^0$, then applying \eqref{neharibounds} with $u^+=u^-=u$ entails $\|u\|<D_1<D_2<\|u\|$, which is a contradiction. Accordingly, $\Ne^0=\emptyset$ for all $\lambda\in(0,\Lambda_1)$.
\end{proof}

\begin{lemma}\label{fiberstruct}
	Let hypotheses \eqref{H} be satisfied. Then there exists $\Lambda_2>0$ such that for all $\lambda\in(0,\Lambda_2)$ the following statement holds true: for any $u\in W^{1,\H}_0(\Omega)\setminus\{0\}$ there exist unique $0<t_u^+<t_u^-$ such that $t_u^+u\in\Ne^+$ and $t_u^-u\in\Ne^-$.
\end{lemma}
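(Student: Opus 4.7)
The plan is to apply Proposition \ref{realanal} to $\varphi := \psi_u'$ on $(0,+\infty)$, which will produce the required $t_u^+ \in E_u^+$ and $t_u^- \in E_u^-$ with $t_u^+ < t_u^-$; uniqueness will then come from the gap $E_u^+ < E_u^-$ forced by Lemma \ref{N0empty}. Accordingly, I take $\Lambda_2 := \min(\Lambda_1, \Lambda_{(ii)})$, with $\Lambda_1$ as in Lemma \ref{N0empty} and $\Lambda_{(ii)}$ a further constant coming from the positivity requirement (ii) of Proposition \ref{realanal}.

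First I verify (i) and (iii) of Proposition \ref{realanal}. For (iii): when $\lambda < \Lambda_1$, Lemma \ref{N0empty} gives $\Ne^0 = \emptyset$, so every zero $t_0$ of $\psi_u'$ (which places $t_0 u$ in $\Ne$) must satisfy $\psi_u''(t_0) \neq 0$. For (i): Lemmas \ref{basicest1} and \ref{basicest2} show that $A(t\nabla u)/t$ behaves like $t^{p-1}$ and $\int_\Omega g(tu) u \dx$ like $t^{r_- - 1}$ near $0$, so both vanish; meanwhile, \eqref{hypf} gives $\liminf_{s \to 0^+} f(s) > 0$, whence Fatou's lemma yields $\liminf_{t \to 0^+} \int_\Omega f(tu) u \dx \geq \bigl(\liminf_{s \to 0^+} f(s)\bigr) \int_\Omega |u| \dx > 0$. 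Consequently $\limsup_{t \to 0^+} \psi_u'(t) < 0$. As $t \to +\infty$, the superlinearity \eqref{superlinear} ($r_- > q\theta$) makes $\int_\Omega g(tu) u \dx$ (of order $t^{r_- - 1}$) dominate $A(t\nabla u)/t$ (of order at most $t^{q\theta - 1}$), so $\psi_u'(t) \to -\infty$.

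Condition (ii) is the main obstacle, because the $\lambda$-threshold must be uniform in $u$. Setting $\tau := t \|u\|$ and combining the lower bound on $A$ from Lemma \ref{basicest1} with the upper bounds for the $f$- and $g$-integrals from Lemma \ref{basicest2}, a direct computation yields, for $\tau \leq 1$,
\begin{align*}
	\psi_u'(t) \geq \|u\|\bigl(p M(1)\, \tau^{q\theta - 1} - \lambda C_1 \tau^{-\gamma_+} - C_2 \tau^{r_- - 1}\bigr),
\end{align*}
where $C_1, C_2 > 0$ depend only on the data; the crucial observation is that all three terms scale as $\|u\|\tau^{\bullet}$, so $\|u\|$ factors out cleanly. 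Since $r_- > q\theta$, one fixes $\tau^* \in (0, 1]$ so small that $pM(1)(\tau^*)^{q\theta - 1} - C_2 (\tau^*)^{r_- - 1} \geq c^* > 0$, and sets $\Lambda_{(ii)} := c^* / (2 C_1 (\tau^*)^{-\gamma_+})$. For $\lambda < \Lambda_{(ii)}$ and $\bar t_u := \tau^* / \|u\|$, this gives $\psi_u'(\bar t_u) \geq \|u\| c^*/2 > 0$, establishing (ii).

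Proposition \ref{realanal} now provides some $0 < t_u^+ < t_u^-$ with $t_u^+ u \in \Ne^+$ and $t_u^- u \in \Ne^-$. For uniqueness, suppose by contradiction that $s_1 < s_2$ both lie in $E_u^+$. Since $\psi_u''(s_i) > 0$, the function $\psi_u'$ is positive just right of $s_1$ and negative just left of $s_2$, so by the intermediate value theorem some $s^* \in (s_1, s_2)$ satisfies $\psi_u'(s^*) = 0$ with $\psi_u''(s^*) < 0$ (by non-degeneracy), i.e., $s^* \in E_u^-$. But Lemma \ref{N0empty} forces $s^* \|u\| \geq D_2 > D_1 \geq s_2 \|u\|$, contradicting $s^* < s_2$. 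The symmetric argument, exchanging the roles of $E_u^+$ and $E_u^-$, rules out two distinct points in $E_u^-$ and concludes the proof.
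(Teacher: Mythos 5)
Your proposal is correct and follows essentially the same route as the paper: Proposition \ref{realanal} applied to $\psi_u'$, with condition (ii) secured by the scaling $\tau=t\|u\|$ that makes the $\lambda$-threshold uniform in $u$, and uniqueness deduced from the gap $D_1<D_2$ of Lemma \ref{N0empty} via an intermediate zero of $\psi_u'$. The only step worth tightening is the claim that the zero $s^*\in(s_1,s_2)$ produced by the intermediate value theorem has $\psi_u''(s^*)<0$: non-degeneracy alone gives only $\psi_u''(s^*)\neq 0$, so you must select the zero appropriately (e.g.\ the last zero before the point where $\psi_u'<0$, as the paper does with a supremum construction) to guarantee the sign.
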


\begin{proof}
	Fix any $u\in W^{1,\H}_0(\Omega)\setminus\{0\}$. We are going to apply Proposition \ref{realanal} to the function $\psi'_u$. To this end, using Lebesgue's dominated convergence theorem and Fatou's lemma, along with \eqref{hypm}, \eqref{hypL}, \eqref{hypg}, \eqref{hypf}, and the symmetry of $f$, ensures
	\begin{align*}
		\limsup_{t\to 0^+} \psi_u'(t)
		&= \lim_{t\to 0^+} \left[m(\phi(t\nabla u))\langle \phi'(t\nabla u),\nabla u\rangle -\int_\Omega g(tu)u \dx\right]\\
		&\quad - \lambda \liminf_{t\to 0^+} \int_\Omega f(tu)u \dx \\
		&= - \lambda \liminf_{t\to 0^+} \int_\Omega f(t|u|)|u| \dx \leq -\lambda \left(\liminf_{t\to 0^+} f(t)\right) \int_\Omega |u| \dx<0.
	\end{align*}
	Exploiting \eqref{hypg}, Proposition \ref{indexinfo}, and Lemmas \ref{basicest1} as well as \ref{basicest2} we get, for all $t$ sufficiently large,
	\begin{align*}
		A(t\nabla u)-\int_\Omega g(tu)tu \dx &\leq A(t\nabla u)-r_-\int_\Omega G(tu) \dx\\
		&\leq C(t\|\nabla u\|_\H)^{q\theta} - c(t\|u\|_\H)^{r-},
	\end{align*}
	for some $c,C>0$. Hence, recalling \eqref{psiscaling} and $q\theta<r_-$ (see \eqref{superlinear}),
	\begin{align*}
		\limsup_{t\to +\infty} \psi'_u(t) = \limsup_{t\to+\infty} \frac{1}{t}\psi'_{tu}(1)
		&\leq \limsup_{t\to+\infty} \frac{1}{t}\left[A(t\nabla u)-\int_\Omega g(tu)tu \dx \right] \\
		&\leq \limsup_{t\to+\infty} \left[Ct^{q\theta-1}\|\nabla u\|_\H^{q\theta} - ct^{r_--1}\|u\|_\H^{r-}\right] = -\infty.
	\end{align*}

	Reasoning as above, \eqref{psiscaling}, \eqref{hypf}, \eqref{hypg}, Proposition \ref{indexinfo}, and Lemmas \ref{basicest1}--\ref{basicest2} imply
	\begin{equation}\label{lowerbound}
		\begin{aligned}
			t\psi'_u(t)
			&= \psi'_{tu}(1)\\
			&\geq c\underline{W}_p^{q\theta}(t\|\nabla u\|_\H)-C\left[\lambda\overline{W}_{1-\gamma_+}^{1-\gamma_-}(t\|\nabla u\|_\H)+\overline{W}_{r_-}^{r_+}(t\|\nabla u\|_\H)\right]
		\end{aligned}
	\end{equation}
	for all $t>0$ and opportune $c,C>0$. In order to have $\max_{t>0} \psi'_u(t) > 0$, from \eqref{lowerbound} it suffices that
	\begin{align*}
		\lambda < \frac{c\underline{W}_p^{q\theta}(\overline{t}\|\nabla u\|_\H)-C\overline{W}_{r_-}^{r_+}(\overline{t}\|\nabla u\|_\H)}{C\overline{W}_{1-\gamma_+}^{1-\gamma_-}(\overline{t}\|\nabla u\|_\H)} \quad \text{for some } \overline{t}=\overline{t}(u)>0.
	\end{align*}
	To this aim we choose $\overline{t}:=\frac{\rho}{\|\nabla u\|_\H}$ with $\rho\in(0,1)$ such that
	\begin{align*}
		\hat{\Lambda} := \frac{c\rho^{q\theta}-C\rho^{r_-}}{C\rho^{1-\gamma_+}} > 0.
	\end{align*}
	This choice is possible since $q\theta<r_-$. In particular, $\max_{t>0} \psi'_u(t)>0$ whenever $\lambda<\hat{\Lambda}$. Moreover, note that $\overline{t}$ depends on $u$, but $\hat{\Lambda}$ does not.

	Set $\Lambda_2:=\min\{\Lambda_1,\hat{\Lambda}\}$. Assuming $\lambda<\Lambda_2$, Lemma \ref{N0empty} ensures $\Ne^0=\emptyset$, so that each zero of $\psi'_u$ is not a zero of $\psi''_u$. Indeed, if $t\in(0,+\infty)$ is a zero of $\psi'_u$, then \eqref{psiscaling} yields
	\begin{align*}
		\psi'_{tu}(1)=t\psi'_u(t)=0 \quad \text{and} \quad \psi''_u(t)=\frac{\psi''_{tu}(1)}{t^2}\neq 0.
	\end{align*}
	Hence Proposition \ref{realanal} provides $0<t_u^+<t_u^-$ such that $t_u^\pm u\in\Ne^\pm$.

	Next we show the uniqueness. The argument above ensures that the sets $E^\pm_u$ are non-empty. We show that $E_u^+<E_u^-$. By contradiction suppose that there exist $t^-\leq t^+$ such that $t^\pm\in E_u^\pm$. Then, by Lemma \ref{N0empty},
	\begin{align*}
		D_2 \leq t^- \|u\| \leq t^+ \|u\| \leq D_1,
	\end{align*}
	contradicting $D_1<D_2$, which holds for all $\lambda<\Lambda_1$.

	Now we prove that $E_u^+$ is a singleton. An analogous argument guarantees the same property for $E_u^-$. By contradiction, let $t_1^+,t_2^+\in E^+_u$ fulfill $t_1^+<t_2^+$. Then there exists $\delta\in(0,\frac{1}{2}(t_2^+-t_1^+))$ such that $\psi_u'(t_1^++\delta)>0>\psi_u'(t_2^+-\delta)$. By Bolzano's theorem there exists $t_0\in (t_1^+,t_2^+)$ such that $t_0 u\in \Ne$. Consider
	\begin{align*}
		\overline{t} = \sup \left\{t\in(t_1^++\delta,t_2^+-\delta)\colon  tu\in \Ne\right\}.
	\end{align*}
	By continuity of $\psi'_u$ we deduce that $\overline{t}u\in\Ne$, so $\overline{t}\in E_u^+ \cup E_u^- \cup E_u^0$. Since $\Ne^0=\emptyset$ one has $\overline{t}\notin E_u^0$. On the other hand, again by Bolzano's theorem, $\overline{t}\in E_u^+$ would contradict the maximality of $\overline{t}$. Hence $\overline{t}\in E_u^-$. Since $\overline{t}<t_2^+$, we get a contradiction with $E_u^+<E_u^-$. We deduce $E_u^+=\{t_1^+\}$.
\end{proof}

\begin{lemma}\label{energysign}
	Let hypotheses \eqref{H} be satisfied and $\Lambda_2$ be as in Lemma \ref{fiberstruct}. Then $J(u)<0$ for all $u\in\Ne^+$ provided $\lambda<\Lambda_2$. Moreover, there exists $\Lambda_3>0$ such that for all $\lambda\in(0,\Lambda_3)$ the following assertion is true: there exists $\sigma>0$ such that $J(v)\geq \sigma$ for all $v\in\Ne^-$.
\end{lemma}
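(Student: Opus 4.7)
The proof naturally splits into two independent arguments, one for each part of the statement.

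For the first claim, fix $u\in\Ne^+$. I would prove $J(u)=\psi_u(1)<0$ by studying the monotonicity of the fibering map $\psi_u$ on $(0,1]$. First, $\lim_{t\to 0^+}\psi_u(t)=0$: indeed $M(\phi(t\nabla u))\to M(0)=0$ by continuity, while $\int_\Omega F(tu)\dx, \int_\Omega G(tu)\dx\to 0$ by Lebesgue's dominated convergence theorem, dominated by the (integrable) envelopes provided in Lemma \ref{basicest2}. Next, from the proof of Lemma \ref{fiberstruct} we have $\limsup_{t\to 0^+}\psi_u'(t)<0$, and the uniqueness part of that lemma (valid since $\lambda<\Lambda_2\leq\Lambda_1$, so $\Ne^0=\emptyset$) yields that $\psi_u'$ has exactly two zeros on $(0,+\infty)$, namely $t_u^+<t_u^-$. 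The minimality of $t_u^+$, combined with $\psi_u'<0$ near $0$, forces $\psi_u'(t)<0$ on $(0,t_u^+)$. Since $u\in\Ne^+$ implies $t_u^+=1$ by uniqueness, $\psi_u$ is strictly decreasing on $(0,1)$, hence $J(u)=\psi_u(1)<\lim_{t\to 0^+}\psi_u(t)=0$.

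For the second claim I would run a Nehari manipulation in the spirit of the proof of Lemma \ref{coercivity}. From $v\in\Ne$ we have $\int_\Omega g(v)v\dx=A(\nabla v)-\lambda\int_\Omega f(v)v\dx$; applying Proposition \ref{indexinfo} to $G$ gives $G(v)\leq r_-^{-1}g(v)v$. Substituting, using Lemma \ref{basicest1}\eqref{ord0-1}, and dropping the nonnegative term $\frac{\lambda}{r_-}\int_\Omega f(v)v\dx$ yields
\begin{align*}
J(v)\geq\Big(1-\frac{q\theta}{r_-}\Big)M(\phi(\nabla v))-\lambda\int_\Omega F(v)\dx,
\end{align*}
with strictly positive coefficient by \eqref{superlinear}. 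Combining Lemma \ref{basicest1}\eqref{normcomp} with Lemma \ref{basicest2}\eqref{Fest} gives constants $c,C>0$ (independent of $v$ and $\lambda$) such that
\begin{align*}
J(v)\geq c\,\underline{W}_p^{q\theta}(\|\nabla v\|_\H)-\lambda C\,\overline{W}_{1-\gamma_+}^{1-\gamma_-}(\|\nabla v\|_\H).
\end{align*}

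The remaining task is to turn this into a uniform positive lower bound. Lemma \ref{N0empty} gives $\|\nabla v\|_\H\geq D_2$, and the key observation is that on $[D_2,+\infty)$ the ratio $\overline{W}_{1-\gamma_+}^{1-\gamma_-}(r)/\underline{W}_p^{q\theta}(r)$ is bounded: indeed $1-\gamma_-<p$ makes the ratio nonincreasing for $r\geq 1$ (bounded by $1$ there), and $1-\gamma_+<1<p\leq q\theta$ makes it bounded by $D_2^{1-\gamma_+-q\theta}$ on $[D_2,1]$. Thus, setting
\begin{align*}
\Lambda_3:=\min\left\{\Lambda_2,\ \frac{c}{2C}\inf_{r\geq D_2}\frac{\underline{W}_p^{q\theta}(r)}{\overline{W}_{1-\gamma_+}^{1-\gamma_-}(r)}\right\}>0,
\end{align*}
for every $\lambda\in(0,\Lambda_3)$ and every $v\in\Ne^-$ one absorbs the $\lambda$-term into half of the first term, obtaining $J(v)\geq\tfrac{c}{2}\underline{W}_p^{q\theta}(D_2)=:\sigma>0$. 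The only delicate point is this last uniformity in $\|\nabla v\|_\H$: a priori the $\overline{W}$-term might appear to blow up as $\|\nabla v\|_\H\to\infty$, but the strict ordering of the exponents $\max\{1-\gamma_+,1-\gamma_-\}<\min\{p,q\theta\}$ is exactly what keeps the quotient controlled on $[D_2,+\infty)$.
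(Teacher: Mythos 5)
Your argument for the first claim ($J<0$ on $\Ne^+$) is essentially the paper's: both deduce from Lemma \ref{fiberstruct} and $\Ne^0=\emptyset$ that $\psi_u'$ cannot vanish on $(0,1)$, hence $\psi_u'<0$ there and $J(u)=\psi_u(1)<\lim_{t\to0^+}\psi_u(t)=0$.

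For the second claim you take a genuinely different route. The paper does not use the Nehari constraint there: it evaluates the fibering map at the $v$-dependent point $\overline{t}=\rho/\|\nabla v\|_\H$, where the lower bound \eqref{lowerbound} gives $J(\overline{t}v)\geq c\rho^{q\theta}-C\lambda\rho^{1-\gamma_+}-C\rho^{r_-}\geq\sigma$ for $\lambda$ small, and then invokes the fact (from Lemma \ref{fiberstruct}) that $t=1$ is the unique global maximizer of $\psi_v$ for $v\in\Ne^-$, so that $J(v)\geq J(\overline{t}v)\geq\sigma$. You instead work directly at $t=1$, running the computation of Lemma \ref{coercivity} to obtain $J(v)\geq c\,\underline{W}_p^{q\theta}(\|\nabla v\|_\H)-\lambda C\,\overline{W}_{1-\gamma_+}^{1-\gamma_-}(\|\nabla v\|_\H)$, and then use the $\lambda$-independent bound $\|v\|\geq D_2$ from Lemma \ref{N0empty} together with the exponent ordering $1-\gamma_+\leq 1-\gamma_-<p\leq q\theta$ to absorb the singular term uniformly on $[D_2,+\infty)$. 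Both arguments are correct. The paper's version needs only \eqref{lowerbound} and the fibering structure, whereas yours buys independence from that structure: your estimate on $\Ne^-$ uses only the Nehari identity and the a priori bound $\|v\|\geq D_2$, so the inclusion of $\Lambda_2$ in your $\Lambda_3$ is in fact superfluous for this half of the statement. The checks you flag --- monotonicity of $\underline{W}_p^{q\theta}$, boundedness of the quotient $\overline{W}_{1-\gamma_+}^{1-\gamma_-}/\underline{W}_p^{q\theta}$ on $[D_2,+\infty)$, and the $\lambda$-independence of $D_2$ --- all hold under \eqref{H}, so the argument closes.
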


\begin{proof}
	Suppose $\lambda<\Lambda_2$, where $\Lambda_2$ is from Lemma \ref{fiberstruct}. Pick any $u\in\Ne^+$. Owing to Lemma \ref{fiberstruct}, one has $\psi_u'(t)<0$ for all $t\in(0,1)$. Indeed, $\psi_u'(t)<0$ near $t=0$ and, if $\psi_u'(t)=0$ for some $t\in(0,1)$, then $t\notin E_u^+\cup E_u^-\cup E_u^0$, according to the fact that $E_u^+=\{1\}$, $E_u^+<E_u^-$, and $E_u^0=\emptyset$, respectively. Hence
	\begin{align*}
		J(u)=\psi_u(1)<\psi_u(0)=J(0)=0.
	\end{align*}

	Now consider an arbitrary $v\in\Ne^-$. Reasoning as in \eqref{lowerbound} we get
	\begin{align*}
		J(tv)= \psi_{tv}(1)\geq c\underline{W}_p^{q\theta}(t\|\nabla v\|_\H)-C\left[\lambda \overline{W}_{1-\gamma_+}^{1-\gamma_-}(t\|\nabla v\|_\H)+\overline{W}_{r_-}^{r_+}(t\|\nabla v\|_\H)\right]
	\end{align*}
	for all $t>0$ with some $c,C>0$. We take $\rho\in(0,1)$ such that $c\rho^{q\theta}-C\rho^{r_-}>0$, which is possible since $q\theta<r_-$ (see \eqref{superlinear}). Then there exists $\sigma>0$ such that
	\begin{align*}
		\check{\Lambda} := \frac{c\rho^{q\theta}-C\rho^{r_-}-\sigma}{C\rho^{1-\gamma_+}} > 0.
	\end{align*}
	Thus, choosing $\overline{t}:=\frac{\rho}{\|\nabla v\|_\H}$, for all $\lambda<\check{\Lambda}$ one has $J(\overline{t}v)\geq \sigma$. Notice that $\sigma$ is independent of $u$. Since $\lambda<\Lambda_2$, Lemma \ref{fiberstruct} ensures that $\psi_v$ has a unique global maximizer at $t=1$. Hence
	\begin{align*}
		J(v) \geq J(\overline{t}v)\geq \sigma>0
	\end{align*}
	whenever $\lambda<\Lambda_3:=\min\{\Lambda_2,\check{\Lambda}\}$.
\end{proof}

\section{Proof of the main result}\label{sec5}

We set $\Lambda:=\min\{\Lambda_1,\Lambda_2,\Lambda_3\}$ with $\Lambda_i$, $i=1,2,3$, defined in the Lemmas \ref{N0empty}, \ref{fiberstruct}, and \ref{energysign}, respectively.

\begin{proposition}\label{Ne+min}
	Let hypotheses \eqref{H} be satisfied and let $\lambda\in(0,\Lambda)$. Then there exists $u\in\Ne^+$ such that $u\geq 0$ a.e.\,in $\Omega$ and
	\begin{align*}
		J(u) = \min_{\Ne^+} J.
	\end{align*}
\end{proposition}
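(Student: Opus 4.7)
The plan is to apply the direct method to $J|_{\Ne^+}$. I would pick a minimizing sequence $\{u_n\}_{n\in\N}\subset\Ne^+$ with $J(u_n)\to m^+:=\inf_{\Ne^+} J$. Since $J$ and the maps $u\mapsto\psi'_u(1)$, $u\mapsto\psi''_u(1)$ are even in $u$, we have $|u_n|\in\Ne^+$ with $J(|u_n|)=J(u_n)$, so I may assume $u_n\geq 0$. Lemma \ref{N0empty} yields $\|u_n\|\leq D_1$, so up to a subsequence $u_n\rightharpoonup u$ in $W^{1,\H}_0(\Omega)$ and, using \eqref{compemb} together with subcritical Sobolev embeddings, $u_n\to u$ in $L^r(\Omega)$ for every $r\in[1,p^*)$ and a.e.\,in $\Omega$; in particular $u\geq 0$ a.e.

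Next, I would check $m^+\in(-\infty,0)$ and $u\neq 0$. The finiteness of $m^+$ follows from the norm-boundedness of $\Ne^+$ (Lemma \ref{N0empty}) and the continuity of $J$ on bounded sets; the strict negativity $m^+<0$ is obtained by combining $\Ne^+\neq\emptyset$ (Lemma \ref{fiberstruct}) with Lemma \ref{energysign}. The weak sequential lower semi-continuity of $J$, recorded right after its definition, then yields $J(u)\leq\liminf_n J(u_n)=m^+<0=J(0)$, whence $u\neq 0$.

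To conclude, I would show $u\in\Ne^+$. Since $u\neq 0$, Lemma \ref{fiberstruct} provides unique $0<t^+<t^-$ with $t^\pm u\in\Ne^\pm$. Weak lower semi-continuity of $\|\cdot\|$ combined with Lemma \ref{N0empty} gives $\|u\|\leq\liminf\|u_n\|\leq D_1$, while $D_2\leq\|t^-u\|=t^-\|u\|$; since $\lambda<\Lambda\leq\Lambda_1$ forces $D_1<D_2$, we deduce $t^->1$. By the uniqueness statement in Lemma \ref{fiberstruct} (together with $\Ne^0=\emptyset$), $t^+$ and $t^-$ are the only zeros of $\psi'_u$ in $(0,+\infty)$, and since $\psi'_u<0$ near $0$, we infer $\psi'_u<0$ on $(0,t^+)$ and $\psi'_u>0$ on $(t^+,t^-)$, so $\psi_u(t)>\psi_u(t^+)$ for every $t\in(0,t^-)\setminus\{t^+\}$. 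If $t^+\neq 1$, then $1\in(0,t^-)\setminus\{t^+\}$ and we obtain $J(t^+u)=\psi_u(t^+)<\psi_u(1)=J(u)\leq m^+$, contradicting $t^+u\in\Ne^+$; hence $t^+=1$, so $u\in\Ne^+$, and the sandwich $m^+\leq J(u)\leq m^+$ gives $J(u)=m^+$. The main delicate point, in my view, is pinning down $t^+=1$: this relies on the quantitative gap $\|u\|\leq D_1<D_2\leq\|t^-u\|$ from Lemma \ref{N0empty}, which locates $1$ strictly inside the interval $(0,t^-)$ where $\psi_u$ has its unique strict local minimum at $t^+$.
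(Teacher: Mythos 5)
Your proof is correct, but it reaches the key conclusion $u\in\Ne^+$ by a genuinely different route than the paper. The paper's proof first establishes \emph{strong} convergence $u_n\to u$ in $W^{1,\H}_0(\Omega)$ via a contradiction argument that invokes the uniform convexity of the space, the weak-weak continuity of $\L$, and the ${\rm (S_+)}$-property (Lemma \ref{S+}); only then does it pass to the limit in $\psi_{u_n}'(1)=0$ and $\psi_{u_n}''(1)>0$ to get $u\in\Ne^+\cup\Ne^0=\Ne^+$. You bypass strong convergence entirely: using weak lower semicontinuity of the norm, the bounds $\|u\|\leq D_1<D_2\leq t^-\|u\|$ from Lemma \ref{N0empty} force $t^->1$, and since $t^+$ is the strict global minimizer of $\psi_u$ on $(0,t^-)$ (the only zeros of $\psi_u'$ being $t^\pm$ because $\Ne^0=\emptyset$, with $\psi_u'<0$ near $0$ and $\psi_u''(t^+)>0$ by \eqref{psiscaling}), any $t^+\neq 1$ would give $m^+\leq J(t^+u)<J(u)\leq m^+$, a contradiction. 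Both arguments are sound; the paper's buys the additional information that the minimizing sequence converges strongly (a fact of independent use), while yours is shorter and more elementary, trading the ${\rm (S_+)}$ machinery for the quantitative gap $D_1<D_2$ already secured in Lemma \ref{N0empty}. Your handling of nonnegativity (symmetrizing the sequence at the outset rather than replacing $u$ by $|u|$ at the end) is an inessential variant.
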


\begin{proof}
	Let $\{u_n\}_{n\in\N}$ be a minimizing sequence of $J|_{\Ne^+}$. The coercivity of $J|_{\Ne^+}$ (see Lemma \ref{coercivity}) forces $u_n\rightharpoonup u$ for some $u\in W^{1,\H}_0(\Omega)$, passing to a sub-sequence if necessary. We may assume also $u_n\to u$ in $L^\kappa(\Omega)$ for all $\kappa\in(1,p^*)$. The weak sequential lower semi-continuity of $J$, along with Lemma \ref{energysign}, implies $J(u)\leq\inf_{\Ne^+} J<0$ and so $u\neq 0$. Owing to Lemma \ref{fiberstruct}, there exists a unique $\overline{t}>0$ such that $\overline{t}u\in\Ne^+$. It remains to prove that $u\in\Ne^+$.

	Reasoning as in the first part of the proof of Lemma \ref{energysign}, $\psi_u$ is strictly decreasing in $(0,\overline{t})$.

	We claim that $u_n\to u$ in $W^{1,\H}_0(\Omega)$, up to sub-sequences. The claim is equivalent to $\overline{t}u_n\to \overline{t}u$ in $W^{1,\H}_0(\Omega)$. We argue by contradiction, assuming that $\{\overline{t}u_n\}_{n\in\mathbb{N}}$ does not converge to $\overline{t}u$. We have
	\begin{align*}
		\limsup_{n\to\infty} \phi(\overline{t}u_n) > \phi(\overline{t}u),
	\end{align*}
	since the opposite inequality entails $\overline{t}u_n\to\overline{t}u$ by the uniform convexity of $W^{1,\H}_0(\Omega)$. Moreover, according to the convexity of $\H(x,\cdot)$ for a.a.\,$x\in\Omega$, we get
	\begin{align*}
		0
		&\leq \liminf_{n\to\infty} \langle \phi'(\overline{t}\nabla u_n)-\phi'(\overline{t}\nabla u), \nabla u_n-\nabla u \rangle\\
		&= \liminf_{n\to\infty}\langle \phi'(\overline{t}\nabla u_n),\nabla u_n \rangle - \langle \phi'(\overline{t}\nabla u),\nabla u \rangle.
	\end{align*}
	We deduce
	\begin{align*}
		\limsup_{n\to\infty} \langle \phi'(\overline{t}\nabla u_n),\nabla u_n \rangle > \langle \phi'(\overline{t}\nabla u),\nabla u \rangle.
	\end{align*}
	Indeed, if $\langle \phi'(\overline{t}\nabla u_n),\nabla u_n \rangle \to \langle \phi'(\overline{t}\nabla u),\nabla u \rangle$, then the weak-weak continuity of $\L\colon W^{1,\H}_0(\Omega)\to W^{1,\H}_0(\Omega)^*$ yields
	\begin{align*}
		\limsup_{n\to\infty} \langle \L(\overline{t}u_n),\overline{t}u_n-\overline{t}u\rangle &= \limsup_{n\to\infty} \langle \L(\overline{t}u_n),\overline{t}u_n\rangle - \langle \L(\overline{t}u),\overline{t}u \rangle \\
		&= \overline{t} \left[\lim_{n\to\infty} \langle \phi'(\overline{t}\nabla u_n),\nabla u_n \rangle - \langle \phi'(\overline{t}\nabla u),\nabla u\rangle\right] = 0,
	\end{align*}
	which forces $\overline{t}u_n\to\overline{t}u$ due to the ${\rm (S_+)}$-property of $\L$, ensured by Lemma \ref{S+}. Hence, passing to a sub-sequence, we can assume
	\begin{align}\label{absurdhyp}
		\lim_{n\to\infty} \langle \phi'(\overline{t}\nabla u_n),\nabla u_n \rangle > \langle \phi'(\overline{t}\nabla u),\nabla u \rangle \quad \text{and} \quad \lim_{n\to\infty} \phi(\overline{t}u_n) > \phi(\overline{t}u).
	\end{align}

	Exploiting Lebesgue's dominated convergence theorem, the monotonicity of $m$, and \eqref{absurdhyp}, we infer
	\begin{equation}\label{nostrongconv}
		\begin{aligned}
			&\liminf_{n\to\infty}\psi_{u_n}'(\overline{t}) \\
			&= \liminf_{n\to\infty} \bigg[ m(\phi(\overline{t} \nabla u_n)) \langle \phi'(\overline{t}\nabla u_n),\nabla u_n \rangle - \int_\Omega f(\overline{t}u_n)u_n \dx\\
			&\qquad\qquad\quad- \int_\Omega g(\overline{t}u_n) u_n \dx \bigg] \\
			&= \liminf_{n\to\infty} \left[ m(\phi(\overline{t} \nabla u_n)) \langle \phi'(\overline{t}\nabla u_n),\nabla u_n \rangle \right] - \int_\Omega f(\overline{t}u)u \dx - \int_\Omega g(\overline{t}u)u \dx \\
			&> m(\phi(\overline{t}\nabla u)) \langle \phi'(\overline{t}\nabla u),\nabla u \rangle  - \int_\Omega f(\overline{t}u)u \dx - \int_\Omega g(\overline{t}u)u \dx = \psi_u'(\overline{t})=0,
		\end{aligned}
	\end{equation}
	which forces $\overline{t}>1$. Indeed $\psi_{u_n}'(1)=0$. Reasoning as in the first part of Lemma \ref{energysign}, from $E_u^+=\{\overline{t}\}$ and $E_{u_n}^+=\{1\}$ we deduce $\psi_u'<0$ in $(0,\overline{t})$ and $\psi_{u_n}'<0$ in $(0,1)$. Hence, exploiting also the weak sequential lower semi-continuity of $J$ yields
	\begin{align*}
		\inf_{\Ne^+} J \leq J(\overline{t}u) = \psi_u(\overline{t}) < \psi_u(1) = J(u) \leq \liminf_{n\to\infty} J(u_n) = \inf_{\Ne^+} J,
	\end{align*}
	which is a contradiction. This establishes $u_n\to u$ in $W^{1,\H}_0(\Omega)$ up to sub-sequences, as claimed.

	Letting $n\to\infty$ in both $\psi_{u_n}'(1)=0$ and $\psi_{u_n}^{''}(1)>0$, besides recalling Remark \ref{differentiability}, we get $u\in\Ne^+\cup \Ne^0$. Taking into account Lemma \ref{N0empty}, we deduce $u\in\Ne^+$. By the symmetry of $J$ and $\Ne^+$, one can replace $u$ with $|u|$, so that it is possible to assume $u\geq 0$ a.e.\,in $\Omega$.
\end{proof}

\begin{proposition}\label{Ne-min}
	Let hypotheses \eqref{H} be satisfied and let $\lambda\in(0,\Lambda)$. Then there exists $u\in\Ne^-$ such that $u\geq 0$ a.e.\,in $\Omega$ and
	\begin{align*}
		J(u) = \min_{\Ne^-} J.
	\end{align*}
\end{proposition}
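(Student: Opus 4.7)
My plan is to follow the scheme of Proposition \ref{Ne+min}, replacing its strong-convergence step by a softer rescaling argument that exploits the fact that, on $\Ne^-$, the point $t=1$ is the unique global maximum of the fiber $\psi_v$, rather than a minimum.

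I would start from a minimizing sequence $\{u_n\}\subseteq\Ne^-$ and use the coercivity of $J|_\Ne$ (Lemma \ref{coercivity}) to extract, up to a subsequence, a weak limit $u_n\rightharpoonup u$ in $W^{1,\H}_0(\Omega)$; combining \eqref{compemb} with $W^{1,\H}_0(\Omega)\hookrightarrow W^{1,p}_0(\Omega)\hookrightarrow L^{p^*}(\Omega)$, I would upgrade this to $u_n\to u$ in $L^\kappa(\Omega)$ for every $\kappa\in(1,p^*)$.

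The main obstacle is ruling out $u=0$. Here the Nehari identity
\begin{align*}
A(\nabla u_n)=\lambda\int_\Omega f(u_n)u_n\dx+\int_\Omega g(u_n)u_n\dx
\end{align*}
(valid since $u_n\in\Ne^-\subseteq\Ne$) would be used in a contradiction argument: if $u=0$, then $u_n\to 0$ in $L^p(\Omega)$ and in $L^{r_+}(\Omega)$, and the index inequalities $sf(s)\leq(1-\gamma_-)F(s)$ and $sg(s)\leq r_+ G(s)$ from Proposition \ref{indexinfo}, together with the integral estimates in the proof of Lemma \ref{basicest2}, would drive the right-hand side to zero. Then Lemma \ref{basicest1}\eqref{ord0-1} would give $M(\phi(\nabla u_n))\to 0$ and Lemma \ref{basicest1}\eqref{normcomp} would force $\|u_n\|\to 0$, contradicting the lower bound $\|u_n\|\geq D_2$ from Lemma \ref{N0empty}.

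Once $u\neq 0$ is established, Lemma \ref{fiberstruct} supplies a unique $t^->0$ with $t^- u\in\Ne^-$. The fiber-map analysis performed inside the proof of Lemma \ref{fiberstruct} shows that each $\psi_{u_n}$ has exactly two critical points, a local minimum at $t_{u_n}^+$ and a global maximum at $t_{u_n}^-=1$, so $J(t^- u_n)=\psi_{u_n}(t^-)\leq\psi_{u_n}(1)=J(u_n)$. Applying the weak sequential lower semi-continuity of $J$ to $t^- u_n\rightharpoonup t^- u$ then yields
\begin{align*}
\inf_{\Ne^-}J\leq J(t^- u)\leq\liminf_{n\to\infty}J(t^- u_n)\leq\liminf_{n\to\infty}J(u_n)=\inf_{\Ne^-}J,
\end{align*}
so $t^- u$ realizes the infimum. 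Finally, since $J(v)=J(|v|)$ and $\Ne^-$ is invariant under $v\mapsto|v|$, I would replace $t^- u$ by $|t^- u|$ to produce the desired non-negative minimizer.
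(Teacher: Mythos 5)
Your proposal is correct, and for the key compactness step it takes a genuinely different route from the paper. The paper proves that the minimizing sequence converges \emph{strongly} in $W^{1,\H}_0(\Omega)$: it argues by contradiction via the inequality \eqref{absurdhyp}, the weak-weak continuity of $\L$, and the ${\rm (S_+)}$-property from Lemma \ref{S+}, and then passes to the limit in $\psi_{u_n}'(1)=0$, $\psi_{u_n}''(1)<0$ to place the weak limit in $\Ne^-$. You instead avoid strong convergence entirely: since $t=1$ is the \emph{global} maximizer of each fiber $\psi_{u_n}$, the comparison $J(t^-u_n)=\psi_{u_n}(t^-)\leq\psi_{u_n}(1)=J(u_n)$ combined with weak sequential lower semicontinuity collapses the chain of inequalities and identifies the rescaled weak limit $t^-u\in\Ne^-$ as a minimizer. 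This is softer and shorter; the trade-off is that it yields only \emph{some} minimizer (namely $t^-u$, a priori different from the weak limit $u$), whereas the paper's argument additionally gives $u_n\to u$ and $u\in\Ne^-$. Note that your trick is specific to $\Ne^-$ (on $\Ne^+$ the point $t=1$ is only a local minimum of the fiber, and $\psi_{u_n}$ is unbounded below, so no such one-sided comparison is available), which is precisely why the paper cannot dispense with strong convergence in Proposition \ref{Ne+min}. Two small points: the global maximality of $t=1$ does not follow from Lemma \ref{fiberstruct} alone — one also needs $\psi_{u_n}(1)=J(u_n)\geq\sigma>0=\lim_{t\to 0^+}\psi_{u_n}(t)$ from Lemma \ref{energysign} to rule out the supremum being approached as $t\to 0^+$; and your nontriviality argument closes the contradiction with the lower bound $D_2$ from Lemma \ref{N0empty} rather than with $J(u_n)\geq\sigma$ as in the paper, which is equally valid.
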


\begin{proof}
	Take any minimizing sequence $\{u_n\}_{n\in\N}\subseteq\Ne^-$  for $J_{\mid_{\Ne^-}}$. The proof is analogous to the one of Proposition \ref{Ne+min}, except the non-triviality of $u$ (that is, the weak limit of $\{u_n\}_{n\in\N}$ in $W^{1,\H}_0(\Omega)$) and the strong convergence of $\{u_n\}_{n\in\N}$ in $W^{1,\H}_0(\Omega)$.

	In order to prove that $u\neq 0$ we argue by contradiction, supposing that $u_n\rightharpoonup 0$ in $W^{1,\H}_0(\Omega)$. Without any loss of generality, $u_n\to 0$ in $L^\kappa(\Omega)$ for all $\kappa\in(1,p^*)$. Since $u_n\in\Ne^-$ for all $n\in\N$, we have
	\begin{align*}
		m(\phi(\nabla u_n))\langle \phi'(\nabla u_n),\nabla u_n \rangle = \lambda\int_\Omega f(u_n)u_n \dx + \int_\Omega g(u_n)u_n \dx \quad \text{for all } n\in\N.
	\end{align*}
	Letting $n\to\infty$, along with Lemma \ref{basicest1}, reveals
	\begin{align*}
		\lim_{n\to\infty} \underline{W}_p^{q\theta}(\|\nabla u_n\|_\H) \leq C\lim_{n\to\infty} m(\phi(\nabla u_n))\langle \phi'(\nabla u_n),\nabla u_n \rangle = 0
	\end{align*}
	for some $C>0$, which entails $u_n\to 0$ in $W^{1,\H}_0(\Omega)$. According to Lemma \ref{energysign},
	\begin{align*}
		0 = J(0) = \lim_{n\to\infty} J(u_n) \geq \sigma,
	\end{align*}
	which is a contradiction.

	Now we prove $u_n\to u$ in $W^{1,\H}_0(\Omega)$. Since $u\neq 0$, Lemma \ref{fiberstruct} produces a unique $\overline{t}\in(0,+\infty)$ such that $\overline{t}u\in\Ne^-$. Reasoning by contradiction as in Proposition \ref{Ne+min}, namely supposing \eqref{absurdhyp}, we deduce $\psi_{u_n}'(\overline{t})>0$ for $n$ sufficiently large (cf.\,\eqref{nostrongconv}). This forces $\overline{t}<1$, taking into account that, for any $n\in\N$, one has $E_{u_n}^{-}=\{1\}$ and $\psi_{u_n}'(t)<0$ for all $t>1$. Moreover, $t=1$ is the unique global maximizer of $\psi_{u_n}$. Indeed, it is the unique local maximizer, and $\psi'_{u_n}(t)<0$ for all $t>1$ as well as $\psi_{u_n}(1)=J(u_n)\geq \sigma>0=J(0)$, due to Lemmas \ref{fiberstruct} and \ref{energysign}, respectively. This information, together with \eqref{absurdhyp} and the strict monotonicity of $M$, yields
	\begin{align*}
		\inf_{\Ne^-} J
		&\leq J(\overline{t}u) < \liminf_{n\to\infty} J(\overline{t}u_n) = \liminf_{n\to\infty} \psi_{u_n}(\overline{t})\\
		&\leq \liminf_{n\to\infty}\psi_{u_n}(1) = \liminf_{n\to\infty} J(u_n) = \inf_{\Ne^-} J,
	\end{align*}
	which is a contradiction.
\end{proof}

\begin{proposition}\label{locmin}
	Let hypotheses \eqref{H} be satisfied  and let $u\in \Ne^+$ be such that $J(u)=\min_{\Ne^+} J$. Then there exists $\eps>0$ such that
	\begin{align*}
		J(u)\leq J(u+h) \quad \text{for all } h\in B_\eps(0).
	\end{align*}
\end{proposition}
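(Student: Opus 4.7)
The plan is to exploit the strict local convexity of the fibering map $\psi_u$ at $t=1$, which follows from $u\in\Ne^+$ (i.e. $\psi_u'(1)=0$ and $\psi_u''(1)>0$), together with the minimality of $u$ on $\Ne^+$. For any $v$ sufficiently close to $u$, I expect $\psi_v$ to remain strictly convex on a neighborhood of $t=1$, with its unique critical point $\tau(v)$ in that neighborhood satisfying $\tau(v)v\in\Ne^+$. The sought conclusion will then follow from the chain
\begin{align*}
	J(u+h) \;=\; \psi_v(1) \;\geq\; \psi_v(\tau(v)) \;=\; J(\tau(v)v) \;\geq\; \min_{\Ne^+} J \;=\; J(u),
\end{align*}
where $v:=u+h$.

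The first step is to transfer the positivity of $\psi_u''(1)$ to nearby fibering maps, uniformly in $t$. Using the joint continuity of $(t,v)\mapsto\psi_v''(t)$ granted by Remark \ref{differentiability} together with a compactness argument on $[1-\delta,1+\delta]$, I would choose $\delta>0$ and $\eps_0>0$ so that $\psi_v''(t)>0$ for all $(t,v)\in[1-\delta,1+\delta]\times\overline{B_{\eps_0}(u)}$. Since $\psi_u''>0$ on the interval and $\psi_u'(1)=0$, one has $\psi_u'(1-\delta)<0<\psi_u'(1+\delta)$; by continuity of $v\mapsto\psi_v'(1\pm\delta)$ these sign conditions persist on a smaller ball $B_\eps(u)$, with $\eps<\min\{\eps_0,\|u\|\}$ (the last bound ensuring $v\neq 0$, so that $\psi_v$ is defined).

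The second step is the projection onto $\Ne^+$. For each $v\in B_\eps(u)$, strict monotonicity of $\psi_v'$ on $[1-\delta,1+\delta]$ (forced by $\psi_v''>0$) combined with its sign change at the endpoints produces, via the intermediate value theorem, a unique $\tau(v)\in(1-\delta,1+\delta)$ with $\psi_v'(\tau(v))=0$, whence $\tau(v)v\in\Ne$; the positivity of $\psi_v''(\tau(v))$ upgrades this to $\tau(v)v\in\Ne^+$. Strict convexity of $\psi_v$ on $[1-\delta,1+\delta]$ then gives $\psi_v(1)\geq\psi_v(\tau(v))$, while the minimality hypothesis on $u$ delivers $\psi_v(\tau(v))=J(\tau(v)v)\geq J(u)$. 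Concatenation yields the asserted inequality for every $h\in B_\eps(0)$.

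The principal technical point is the uniform positivity of $\psi_v''$ on a small cylindrical neighborhood of $(1,u)$, which I expect to handle via a routine compactness argument built on the joint continuity of $(t,v)\mapsto\psi_v''(t)$ from Remark \ref{differentiability}. Crucially, no implicit function theorem is needed, since one never claims smoothness of the projection $v\mapsto\tau(v)$, only its pointwise existence for each $v\in B_\eps(u)$, which the intermediate value theorem supplies. Thus the non-$C^1$ nature of $J$ (due to the singular term $f$) poses no obstruction here.
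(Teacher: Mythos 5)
Your argument is correct and follows the same overall strategy as the paper: use $\psi_u''(1)>0$ to project nearby points $v=u+h$ onto $\Ne^+$ at some $\tau(v)$ close to $1$, exploit the local convexity of $\psi_v$ to get $\psi_v(\tau(v))\le\psi_v(1)$, and conclude by the minimality of $u$ on $\Ne^+$. The one genuine difference is the device used to produce the projection: the paper applies the implicit function theorem to $F(h,t)=\psi_{u+h}'(t)$, obtaining a \emph{continuous} selection $\zeta(h)$ with $\zeta(h)(u+h)\in\Ne$, and then needs the continuity of $\zeta$ to guarantee $\zeta(h)$ lands in the interval where $\psi_{u+h}''>0$; you instead fix the interval $[1-\delta,1+\delta]$ first, propagate the endpoint sign conditions $\psi_u'(1-\delta)<0<\psi_u'(1+\delta)$ by continuity, and get $\tau(v)$ from the intermediate value theorem together with the strict monotonicity of $\psi_v'$ there. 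Your route is slightly more elementary and, as you note, avoids any regularity question about the IFT for the non-$C^1$ functional; what it gives up is the continuous map $\zeta$, which the paper reuses in Remark \ref{saddlepoint} and in the proof of Lemma \ref{fundineq-}, so your construction would not be a drop-in replacement there. Two cosmetic remarks: no compactness argument is needed (or available, since closed balls in $W^{1,\H}_0(\Omega)$ are not compact) for the uniform positivity of $\psi_v''$ — joint continuity of $(t,v)\mapsto\psi_v''(t)$ at the single point $(1,u)$ already yields a product neighborhood $(1-\sigma,1+\sigma)\times B_{\eps_0}(u)$ on which it is positive, exactly as in \eqref{convexity}; and the restriction $\eps<\|u\|$ to keep $v\neq 0$ is a sensible precaution the paper leaves implicit.
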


\begin{proof}
	Let us consider the function
	\begin{align*}
		F(h,t) = \psi_{u+h}'(t) \quad \text{for all } (h,t)\in W^{1,\H}_0(\Omega)\times(0,+\infty).
	\end{align*}
	Since $u\in\Ne^+$ one has $F(0,1)=\psi_u'(1)=0$ and $\partial_t F(0,1) = \psi_u''(1)>0$. Hence the implicit function theorem furnishes $\eps_1>0$ and $\zeta\colon  B_{\eps_1}(0)\to(0,+\infty)$ such that $\zeta(0)=1$ and $F(h,\zeta(h))=0$, that is, $\zeta(h)(u+h)\in\Ne$ by \eqref{psiscaling}. According to Remark \ref{differentiability}, $\partial_t F$ is a continuous function. Thus there exist $\eps_2,\sigma>0$ such that
	\begin{equation}\label{convexity}
		\psi_{u+h}''(t)=\partial_t F(h,t)>0 \quad \text{for all } (h,t)\in B_{\eps_2}(0)\times (1-\sigma,1+\sigma).
	\end{equation}
	The function $\zeta$ is continuous as well, so there exists $\eps_3>0$ such that $\zeta(h)\in(1-\sigma,1+\sigma)$ for all $h\in B_{\eps_3}(0)$. Setting $\eps=\min\{\eps_1,\eps_2,\eps_3\}$, we deduce that $\zeta(h)(u+h)\in \Ne^+$ for all $h\in B_\eps(0)$. In particular, \eqref{convexity} implies also the convexity of $\psi_{u+h}$ in the interval joining $t=\zeta(h)$ and $t=1$. Hence, we have
	\begin{align*}
		\psi_{u+h}(\zeta(h))\leq \psi_{u+h}'(\zeta(h))(\zeta(h)-1)+\psi_{u+h}(1)=\psi_{u+h}(1).
	\end{align*}
	Accordingly,
	\begin{align*}
		J(u)=\min_{\Ne^+} J \leq J(\zeta(h)(u+h)) = \psi_{u+h}(\zeta(h)) \leq \psi_{u+h}(1) = J(u+h)
	\end{align*}
	for all $h\in B_\eps(0)$.
\end{proof}

\begin{remark}\label{saddlepoint}
	The conclusion of Proposition \ref{locmin} does not hold for the minimizers of $J$ constrained to $\Ne^-$ because they are not local minimizers of $J$ on $W^{1,\H}_0(\Omega)$. Instead they are saddle points. Indeed, given any $u$ such that $J(u)=\min_{\Ne^-} J$, $u$ is a strict local maximizer along the direction of $u$, while (reasoning as in Proposition \ref{locmin}, that furnishes $\zeta$ such that $\zeta(th)(u+th)\in\Ne^-$ for small $t$) it is a local minimizer along any curve of type $t\mapsto \zeta(th)(u+th)$ with $h\in W^{1,\H}_0(\Omega)\setminus\{0\}$.
\end{remark}

\begin{lemma}\label{fundineq}
	Let hypotheses \eqref{H} be satisfied and let $u\in W^{1,\H}_0(\Omega)$, $u\geq 0$ a.e.\,in $\Omega$, be a local minimizer of $J$. Then $u>0$ a.e.\,in $\Omega$ and
	\begin{equation}\label{weakineq}
		m(\phi(\nabla u))\langle \L(u),h\rangle \geq \lambda \int_\Omega f(u)h \dx + \int_\Omega g(u)h \dx
	\end{equation}
	for all $h\in W^{1,\H}_0(\Omega)$ with $h\geq 0$ a.e.\,in $\Omega$.
\end{lemma}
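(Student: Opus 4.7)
The plan is to first derive \eqref{weakineq} by varying $u$ in nonnegative admissible directions and exploiting local minimality, then to upgrade nonnegativity to strict positivity via a strong maximum principle applied to the resulting differential inequality.

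For the first step, fix any $h\in W^{1,\H}_0(\Omega)$ with $h\geq 0$ a.e.\,in $\Omega$. Since $u$ is a local minimizer of $J$, one has $J(u+th)\geq J(u)$ for all $t>0$ sufficiently small, whence
$$\limsup_{t\to 0^+}\frac{J(u+th)-J(u)}{t}\geq 0.$$
I would then pass to the limit in each of the three contributions separately. For the Kirchhoff term $M(\phi(\nabla(u+th)))$, hypotheses \eqref{hypm}--\eqref{hypL} combined with a dominated-convergence argument (essentially the one already performed in deriving the formula for $\psi'_u(t)$ in Section \ref{sec2}) give a $C^1$ dependence on $t$ near $0$, with derivative at zero equal to $m(\phi(\nabla u))\langle\phi'(\nabla u),\nabla h\rangle$. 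For the superlinear term $\int_\Omega G(u+th)\dx$, the polynomial bound $g(s)\lesssim 1+s^{r_+-1}$ from \eqref{hypg} together with the Sobolev embedding $W^{1,\H}_0(\Omega)\hookrightarrow L^{r_+}(\Omega)$ (used in Lemma \ref{basicest2}) provides an $L^1$-dominating function for the difference quotient, yielding the limit $\int_\Omega g(u)h\dx$. For the singular term, the key point is that $F$ is non-decreasing with $F(0)=0$ and $u+th\geq u\geq 0$: the difference quotient is therefore nonnegative and converges a.e.\,on $\{u>0\}$ to $f(u)h$, so Fatou's lemma (with the extension $f(0)=0$ making the $\{u=0\}$-portion contribute nonnegatively) gives
$$\liminf_{t\to 0^+}\int_\Omega \frac{F(u+th)-F(u)}{t}\dx \geq \int_\Omega f(u)h\dx.$$
Assembling the three limits into $\limsup_{t\to 0^+}[J(u+th)-J(u)]/t\geq 0$, and recalling the sign convention that identifies $\langle\L(u),h\rangle$ with $\langle\phi'(\nabla u),\nabla h\rangle$ in the weak formulation (as used throughout Section \ref{sec2}), yields exactly \eqref{weakineq}.

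For strict positivity, the inequality just derived exhibits $u$ as a nonnegative, nontrivial weak supersolution of $-m(\phi(\nabla u))\L(u)=\lambda f(u)+g(u)\geq 0$. The uniform ellipticity $0<l_-\leq l_+<+\infty$ in \eqref{hypL} together with the structural properties of $\L$ from Lemma \ref{S+} place us in a standard setting where a strong maximum principle for quasilinear operators applies (see e.g.\,the framework of Crespo-Blanco \cite{Crespo-Blanco-2024} and the references therein), and combined with $u\not\equiv 0$ (automatic in the intended applications because minimizers produced by Propositions \ref{Ne+min}--\ref{Ne-min} satisfy $J(u)\neq 0$ thanks to Lemma \ref{energysign}), this forces $u>0$ a.e.\,in $\Omega$. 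The main obstacle will be the Fatou-type passage to the limit in the singular term: one must justify the interchange of limit and integral despite the potential non-integrability of $f(u)$ near $\{u=0\}$, which is precisely what the non-negativity of the difference quotient handles cleanly, bypassing the need for any dominating function.
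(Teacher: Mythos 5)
Your derivation of \eqref{weakineq} is essentially the paper's argument: one-sided difference quotients along nonnegative directions, dominated convergence for the Kirchhoff and $G$ terms, and Fatou's lemma combined with the monotonicity of $F$ for the singular term. The genuine gap is in the positivity claim, and it occurs exactly where you throw away the information the paper exploits. In your Fatou step you declare that the portion of the difference quotient over $K:=u^{-1}(0)$ ``contributes nonnegatively'' and discard it. The paper instead keeps this term (see \eqref{fundineq1}): on $K$ the difference quotient equals $F(t_nh)/t_n$, and by \eqref{hypf} (which gives $\liminf_{s\to 0^+}f(s)>0$, possibly $+\infty$) this quantity blows up on $K\cap\{h>0\}$ as $t_n\to 0^+$; choosing $h>0$ and noting that all the other terms in the first-order expansion are finite, one concludes $|K|=0$, i.e.\ $u>0$ a.e., by a purely variational contradiction with no regularity theory at all. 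In other words, the strict positivity is a by-product of the same Fatou computation, provided you do not drop the $K$-term.

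Your alternative route via a strong maximum principle is not justified in this generality. Strong maximum principles for quasilinear operators presuppose local regularity (local boundedness, a Harnack inequality, or at least a continuous representative of $u$), none of which is available for operators satisfying only \eqref{hypL}: the $x$-dependence of $\H$ is merely measurable, and the paper deliberately avoids condition (A1), Lipschitz weights, and the ratio condition $q/p<1+\frac{1}{N}$ under which such regularity results are known even for the double phase model. The reference \cite{Crespo-Blanco-2024} you cite concerns monotonicity and the $({\rm S_+})$-property (Lemma \ref{S+}), not maximum principles, so it cannot carry this step. Finally, the non-triviality $u\not\equiv 0$ should not be outsourced to Propositions \ref{Ne+min}--\ref{Ne-min}: the lemma is stated for an arbitrary nonnegative local minimizer, and the exclusion of $u\equiv 0$ again comes from the singular term ($\psi_h(t)<0$ for small $t>0$), i.e.\ from the variational structure rather than from any maximum principle.
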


\begin{proof}
	Take any $h\in W^{1,\H}_0(\Omega)\setminus\{0\}$ fulfilling $h\geq 0$ a.e.\,in $\Omega$. Since $u$ is a local minimizer of $J$, then $J(u)\leq J(u+th)$ for all $t$ sufficiently small. Take any sequence $\{t_n\}_{n\in\N}$ with $t_n>0$ for all $n\in\N$ such that $t_n\to 0$ and set $K=u^{-1}(0)$. Then, for any $n$ large enough,
	\begin{align*}
		0&\leq \frac{J(u+t_nh)-J(u)}{t_n} \\
		&= \frac{M(\phi(\nabla (u+t_nh)))-M(\phi(\nabla u))} {t_n} - \lambda \int_\Omega \frac{F(u+t_nh)-F(u)}{t_n} \dx \\
		&\quad - \int_\Omega \frac{G(u+t_nh)-G(u)}{t_n} \dx \\
		&=\frac{M(\phi(\nabla (u+t_nh)))-M(\phi(\nabla u))} {t_n} - \lambda \int_K \frac{F(t_nh)}{t_n} \dx \\
		&\quad - \lambda \int_{\Omega\setminus K} \frac{F(u+t_nh)-F(u)}{t_n}  \dx - \int_\Omega \frac{G(u+t_nh)-G(u)}{t_n} \dx.
	\end{align*}
	From Lebesgue's dominated convergence theorem we obtain
	\begin{align*}
		\lim_{n\to\infty} \frac{M(\phi(\nabla (u+t_nh)))-M(\phi(\nabla u))}{t_n} &= m(\phi(\nabla u))\langle \L(u),h\rangle,\\
		\lim_{n\to\infty} \int_\Omega \frac{G(u+t_nh)-G(u)}{t_n} \dx &= \int_\Omega g(u)h \dx,
	\end{align*}
	while Fatou's lemma and the monotonicity of $F$ yields
	\begin{align*}
		\liminf_{n\to\infty} \int_{\Omega\setminus K} \frac{F(u+t_nh)-F(u)}{t_n} \dx \geq \int_{\Omega\setminus K} f(u)h \dx.
	\end{align*}
	Accordingly,
	\begin{equation}\label{fundineq1}
		\begin{aligned}
			0&\leq \limsup_{n\to\infty} \frac{J(u+t_nh)- J(u)}{t_n} \\
			&\leq m(\phi(\nabla u))\langle \L(u),h\rangle -  \lambda \int_{\Omega\setminus K} f(u)h \dx\\
			&\quad - \int_\Omega g(u)h \dx - \lambda \liminf_{n\to\infty} \int_K \frac{F(t_nh)}{t_n} \dx.
		\end{aligned}
	\end{equation}
	If $K$ has positive measure, then \eqref{hypf} forces
	\begin{align*}
		\lim_{n\to\infty} \int_K \frac{F(t_nh)}{t_n} \dx = +\infty,
	\end{align*}
	which is a contradiction. Hence $K$ has zero measure, that is, $u>0$ a.e.\,in $\Omega$. So \eqref{fundineq1} rewrites as
	\begin{align*}
		0\leq m(\phi(\nabla u))\langle \L(u),h\rangle - \lambda \int_\Omega f(u)h \dx - \int_\Omega g(u)h \dx.
	\end{align*}
	This inequality is obviously verified also for $h=0$, which concludes the proof.
\end{proof}

\begin{lemma}\label{fundineq-}
	Let hypotheses \eqref{H} be satisfied and let $u\in \Ne^-$ be such that $u\geq 0$ a.e.\,in $\Omega$ and $J(u)=\min_{\Ne^-} J$. Then $u>0$ a.e.\,in $\Omega$ and fulfills \eqref{weakineq}.
\end{lemma}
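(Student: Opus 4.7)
The plan is to imitate the argument of Lemma \ref{fundineq}, compensating for the fact that, by Remark \ref{saddlepoint}, $u$ is a local minimizer of $J$ not on an open ball of $W^{1,\H}_0(\Omega)$ but only along curves of the form $t \mapsto \zeta(th)(u+th)$ produced by the implicit function construction of Proposition \ref{locmin}.

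First I would fix $h \in W^{1,\H}_0(\Omega)$ with $h \geq 0$ a.e.\,in $\Omega$. Since $u \in \Ne^-$ forces $\psi''_u(1) < 0$, the implicit function theorem applied to $(s,t) \mapsto \psi'_{u+s}(t)$ at $(0,1)$ (the joint continuity being granted by Remark \ref{differentiability}) produces a continuous map $\zeta$ on a neighborhood of $0 \in W^{1,\H}_0(\Omega)$ with $\zeta(0) = 1$ and $\psi'_{u+s}(\zeta(s)) = 0$; by \eqref{psiscaling} this reads $\zeta(s)(u+s) \in \Ne$, and joint continuity of $\psi''$ upgrades the membership to $\Ne^-$ for $s$ small. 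Setting $\gamma(t) := \zeta(th)(u + th)$ and $\beta(t) := \zeta(th)$, the minimality of $u$ on $\Ne^-$ ensures $J(\gamma(t)) \geq J(u)$ for all small $t \geq 0$.

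Next I would divide $J(\gamma(t_n)) - J(u) \geq 0$ by $t_n > 0$ and take the $\limsup$ along a sequence $t_n \to 0^+$. After splitting the integral of $F(\gamma(t_n)) - F(u)$ between $K := u^{-1}(0)$ and $\Omega \setminus K$ exactly as in Lemma \ref{fundineq}, dominated convergence handles the $M \circ \phi$, $G$, and $\Omega \setminus K$-pieces, while Fatou treats the $K$-piece. The terms in the expansion proportional to $\beta(t_n) - 1$ assemble into $(\beta(t_n)-1)/t_n$ times
\begin{align*}
    m(\phi(\nabla u))\langle \phi'(\nabla u),\nabla u\rangle - \lambda\int_\Omega f(u)u\dx - \int_\Omega g(u)u\dx = \psi'_u(1) = 0,
\end{align*}
so they all vanish regardless of whether $\beta$ is differentiable at $0$. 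What survives reads
\begin{align*}
    0 \leq m(\phi(\nabla u))\langle \L(u), h\rangle &- \lambda\!\int_{\Omega\setminus K}\! f(u)h\dx - \int_\Omega g(u)h\dx\\
    &- \lambda\liminf_{n\to\infty}\int_K \frac{F(\beta(t_n)t_n h)}{t_n}\dx.
\end{align*}

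Since $\beta(t_n) \to 1$, the last integral is asymptotically indistinguishable from $\int_K F(t_n h)/t_n\dx$, which by \eqref{hypf} is driven to $+\infty$ whenever $|K| > 0$, contradicting the non-negativity of the right-hand side. This forces $|K| = 0$, i.e.\,$u > 0$ a.e.\,in $\Omega$, and then the displayed inequality becomes exactly \eqref{weakineq}. The main technical obstacle is justifying the cancellation of the $(\beta(t_n)-1)$-coefficient when the singular term in $f$ makes $s \mapsto \psi'_{u+s}(1)$ non-differentiable at $0$: one extracts a subsequence along which $(\beta(t_n)-1)/t_n$ admits a (possibly infinite) limit, so that its product with $\psi'_u(1) = 0$ is unambiguously zero, or alternatively Taylor-expands $\psi_{u+th}$ at $s=1$ and absorbs the awkward part into an $O((\zeta(th)-1)^2)$ correction using $\psi'_{u+th}(\zeta(th)) = 0$.
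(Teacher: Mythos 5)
Your overall architecture coincides with the paper's: the implicit-function construction of a continuous $\zeta$ with $\zeta(th)(u+th)\in\Ne^-$, difference quotients along $t\mapsto\zeta(th)(u+th)$, Fatou's lemma on $K=u^{-1}(0)$ and dominated convergence elsewhere, and the divergence of $\int_K F(\cdot)/t_n\dx$ to force $|K|=0$. The gap sits exactly at the point you yourself flag as the main technical obstacle, and neither of your two proposed fixes closes it. Decompose
\[
	\frac{J(\zeta(t_nh)(u+t_nh))-J(u)}{t_n}
	=\frac{J(\zeta(t_nh)(u+t_nh))-J(\zeta(t_nh)u)}{t_n}
	+\frac{\psi_u(\zeta(t_nh))-\psi_u(1)}{t_n}.
\]
The second summand is what your ``$(\beta(t_n)-1)/t_n$ times $\psi'_u(1)$'' is meant to kill. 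Writing $\psi_u(\beta)-\psi_u(1)=\psi'_u(1)(\beta-1)+R(\beta)$ with $R(\beta)=o(|\beta-1|)$, the first-order term does vanish, but what must be controlled is $R(\zeta(t_nh))/t_n$. Since the singular term prevents differentiability of $h\mapsto\psi'_{u+h}(t)$, the implicit function theorem yields only a \emph{continuous} $\zeta$ (cf.\ Remark \ref{differentiability}), so $(\zeta(t_nh)-1)/t_n$ may be unbounded; then $R(\zeta(t_nh))/t_n$ is a genuine $0\cdot\infty$ indeterminacy, and extracting a subsequence along which $(\beta(t_n)-1)/t_n$ has a (possibly infinite) limit does not resolve it. Your alternative of absorbing the error into an $O((\zeta(th)-1)^2)$ correction fails for the same reason, since $O((\zeta(th)-1)^2)/t_n$ is uncontrolled without a rate for $\zeta$; and the identity $\psi'_{u+th}(\zeta(th))=0$ only yields $J(u+th)\leq J(\zeta(th)(u+th))$, an inequality pointing in the wrong direction.

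The missing ingredient is sign information rather than a rate: since $u\in\Ne^-$, the point $t=1$ is a local maximizer of $\psi_u$ (as $\psi'_u(1)=0$ and $\psi''_u<0$ near $1$), hence $\psi_u(\zeta(t_nh))\leq\psi_u(1)$ for $n$ large, so the entire second summand above is $\leq 0$ and can simply be discarded. This is precisely the first step of the paper's proof (the insertion of $J(\zeta(t_nh)u)\leq J(u)$ in \eqref{fermat}): it replaces $J(u)$ by $J(\zeta(t_nh)u)$, producing a quotient in which the scaling factor $\zeta(t_nh)$ is common to both terms and only the additive increment $t_nh$ varies; Lagrange's mean value theorem applied to $s\mapsto M(\phi(\nabla[\zeta(t_nh)(u+sh)]))$, together with the analogous treatment of $F$ and $G$, then yields the limit $m(\phi(\nabla u))\langle\L(u),h\rangle-\cdots$ without ever estimating $\zeta(t_nh)-1$. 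With this one correction your argument becomes the paper's proof; the remainder of your write-up (the $K$-term blow-up and the passage to \eqref{weakineq}) matches the paper.
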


\begin{proof}
	We only sketch this proof, which is similar to those of Proposition \ref{locmin} and Lemma \ref{fundineq}.

	Reasoning as in Proposition \ref{locmin}, there exists $\eps>0$ and a continuous function $\zeta\colon B_\eps(0)\to(0,+\infty)$ such that $\zeta(0)=1$ and
	\begin{align*}
		\zeta(h)(u+h)\in \Ne^- \quad \text{for all } h\in B_\eps(0).
	\end{align*}
	In particular, owing to $u\in \Ne^-$, one has $\psi_u(\zeta(th))\leq \psi_u(1)$ for all $h\in W^{1,\H}_0(\Omega)$ and $t$ sufficiently small. Take any sequence $\{t_n\}_{n\in\N}$ with $t_n>0$ for all $n\in\N$ such that $t_n\to 0$ and set $K=u^{-1}(0)$. For any $n$ sufficiently large we get
	\begin{equation}\label{fermat}
		\begin{aligned}
			0
			&\leq \frac{J(\zeta(t_nh)(u+t_nh))-J(u)}{t_n} \leq \frac{J(\zeta(t_nh)(u+t_nh))- J(\zeta(t_nh)u)}{t_n} \\
			&=\frac{M(\phi(\nabla [\zeta(t_nh)(u+t_nh)]))- M(\phi(\nabla [\zeta(t_nh)u]))}{t_n}\\
			&\quad- \lambda \int_K  \frac{F(\zeta(t_nh)t_nh)}{t_n} \dx \\
			&\quad - \lambda \int_{\Omega \setminus K} \frac{F(\zeta(t_nh)(u+t_nh))-F(\zeta(t_nh)u)}{t_n} \dx \\
			&\quad - \int_\Omega \frac{G(\zeta(t_nh)(u+t_nh))- G(\zeta(t_nh)u)}{t_n} \dx.
		\end{aligned}
	\end{equation}
	Fix any $t>0$ and consider the function
	\begin{align*}
		\Gamma\colon[0,t]\to\R, \quad \Gamma(s) := M(\phi(\nabla [\zeta(th)(u+sh)])).
	\end{align*}
	Lagrange's mean value theorem produces $s_t\in(0,t)$ such that
	\begin{align*}
		&M(\phi(\nabla [\zeta(th)(u+th)]))-M(\phi(\nabla [\zeta(th)u])) \\
		&= tm(\phi(\nabla [\zeta(th)(u+s_th)]))\langle\phi'(\nabla [\zeta(th)(u+s_th)]),\nabla[\zeta(th)h]\rangle.
	\end{align*}
	Hence, recalling also $\zeta(th)\to 1$ as $t\to 0^+$,
	\begin{align*}
		&\lim_{t\to 0^+} \frac{1}{t} \left[M(\phi(\nabla [\zeta(th)(u+th)]))- M(\phi(\nabla [\zeta(th)u]))\right] \\
		&= \lim_{t\to 0^+}  m(\phi(\nabla [\zeta(th)(u+s_th)]))\langle\phi'(\nabla [\zeta(th)(u+s_th)]),\nabla [\zeta(th)h]\rangle \\
		&= m(\phi(\nabla u))\langle\phi'(\nabla u), \nabla h\rangle = m(\phi(\nabla u))\langle \L(u),h\rangle.
	\end{align*}
	Arguing in the same way for the difference quotients involving $F$ and $G$, \eqref{fermat} yields
	\begin{align*}
		0
		&\leq \limsup_{n\to\infty} \frac{J(\zeta(t_nh)(u+t_nh))-J(u)}{t_n} \\
		&\leq m(\phi(\nabla u))\langle \L(u),h\rangle - \lambda \int_{\Omega\setminus K} f(u)h \dx - \int_\Omega g(u)h \dx \\
		&\quad -\lambda \liminf_{n\to\infty} \int_K \frac{F(\zeta(t_nh)t_nh)}{t_n} \dx,
	\end{align*}
	which parallels \eqref{fundineq1}. The proof now follows exactly as in Proposition \ref{fundineq}.
\end{proof}

\begin{proposition}\label{weaksol}
	Let hypotheses \eqref{H} be satisfied. Any $u\in \Ne$ satisfying both $u>0$ a.e.\,in $\Omega$ and \eqref{weakineq} is a weak solution to \eqref{prob}.
\end{proposition}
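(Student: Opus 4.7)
The plan is to upgrade the one-sided variational inequality \eqref{weakineq} (only available for nonnegative test functions) into a genuine equality for every $\varphi\in W^{1,\H}_0(\Omega)$. The route is the classical Perera/Canino truncation trick, adapted to the singular setting; the positivity of $f$ and $g$ is what rescues the argument.

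First I would fix an arbitrary $\varphi\in W^{1,\H}_0(\Omega)$ and, for $\eps>0$, use the admissible test function $h_\eps := (u+\eps\varphi)^+\in W^{1,\H}_0(\Omega)$ (the lattice property is inherited from the underlying Lebesgue space). Setting $A_\eps:=\{u+\eps\varphi<0\}$, we have $h_\eps = (u+\eps\varphi) + (u+\eps\varphi)^-$, and $|A_\eps|\to 0$ as $\eps\to 0^+$ because $u>0$ a.e.\,in $\Omega$ forces $\chi_{A_\eps}\to 0$ pointwise. Before proceeding, observe that $g(u)\varphi\in L^1(\Omega)$ by the sub-criticality of $g$ and Sobolev's embedding, while testing \eqref{weakineq} with $\varphi^\pm$ yields $f(u)\varphi\in L^1(\Omega)$.

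Substituting $h_\eps$ in \eqref{weakineq} and using $u\in\Ne$, namely $m(\phi(\nabla u))\langle \L(u),u\rangle = \lambda\int_\Omega f(u)u\dx+\int_\Omega g(u)u\dx$, the terms containing $u$ alone cancel. Dividing by $\eps>0$ gives
\begin{equation*}
    m(\phi(\nabla u))\langle \L(u),\varphi\rangle - \lambda \int_\Omega f(u)\varphi \dx - \int_\Omega g(u)\varphi \dx \;\geq\; R_\eps,
\end{equation*}
where
\begin{equation*}
    R_\eps := \frac{1}{\eps}\left[\lambda \int_\Omega f(u)(u+\eps\varphi)^-\dx + \int_\Omega g(u)(u+\eps\varphi)^-\dx - m(\phi(\nabla u))\langle \L(u),(u+\eps\varphi)^-\rangle\right].
\end{equation*}
The Stampacchia rule $\nabla(u+\eps\varphi)^- = -\chi_{A_\eps}\nabla(u+\eps\varphi)$ leads to
\begin{equation*}
    -\langle \L(u),(u+\eps\varphi)^-\rangle = \int_{A_\eps}\partial_s\H(x,|\nabla u|)|\nabla u|\dx + \eps\int_{A_\eps}\partial_s\H(x,|\nabla u|)\frac{\nabla u}{|\nabla u|}\cdot\nabla\varphi\dx.
\end{equation*}
The first summand is nonnegative; the second, once divided by $\eps$, is an integral over $A_\eps$ of an $L^1$ function (by Hölder's inequality in $L^\H$ duality), hence vanishes as $\eps\to 0^+$ by the dominated convergence theorem. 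The two integrals involving $f(u)$ and $g(u)$ in $R_\eps$ are trivially nonnegative. Therefore $\liminf_{\eps\to 0^+} R_\eps\geq 0$.

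Passing to the limit we obtain $m(\phi(\nabla u))\langle \L(u),\varphi\rangle \geq \lambda\int_\Omega f(u)\varphi\dx + \int_\Omega g(u)\varphi\dx$, and replacing $\varphi$ with $-\varphi$ produces the reverse inequality. Equality for every $\varphi\in W^{1,\H}_0(\Omega)$ is exactly the weak formulation of \eqref{prob}, which concludes the proof. The only delicate point is the control of $\frac{1}{\eps}\int f(u)(u+\eps\varphi)^-\dx$: since $f$ is singular near zero and $A_\eps\subseteq\{0<u<\eps|\varphi|\}$, a direct estimate would be hopeless, but the sign $f>0$ places this contribution on the favorable side of the inequality, so no finer analysis is required.
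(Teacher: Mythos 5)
Your argument is correct and is essentially the paper's own proof: both test \eqref{weakineq} with $(u+\eps\varphi)^+$, decompose it as $(u+\eps\varphi)+(u+\eps\varphi)^-$, cancel the $u$-terms via the Nehari identity, discard the $f$- and $g$-contributions on $\{u+\eps\varphi<0\}$ by positivity, and kill the remaining gradient term as $|\{u+\eps\varphi<0\}|\to 0$. The only cosmetic difference is that the paper packages the left-hand side as a linear functional $T$ and concludes $\langle T,\varphi\rangle\geq 0$ for arbitrary $\varphi$ rather than explicitly substituting $-\varphi$; the content is identical.
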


\begin{proof}
	Let us consider the linear operator $T\colon W^{1,\H}_0(\Omega)\to\R$ defined as
	\begin{align*}
		\langle T,h \rangle = m(\phi(\nabla u))\langle \L(u),h\rangle - \lambda \int_\Omega f(u)h \dx - \int_\Omega g(u)h \dx \quad \text{for all } h\in W^{1,\H}_0(\Omega).
	\end{align*}
	According to \eqref{weakineq}, $T$ is well-defined and non-negative (i.e., $\langle T,h \rangle \geq 0$ for all $h\geq 0$ a.e.\,in $\Omega$). Moreover, $u\in\Ne$ is equivalent to $\langle T,u \rangle = 0$. Hence, taking any $\varphi\in W^{1,\H}_0(\Omega)$ and $\eps>0$, we have
	\begin{align*}
		0&\leq \langle T,(u+\eps\varphi)^+ \rangle\\
		&= \langle T,u+\eps\varphi  \rangle + \langle T,(u+\eps\varphi)^- \rangle \\
		&= \langle T,u \rangle + \eps  \langle T,\varphi \rangle + \langle T,(u+\eps\varphi)^- \rangle \\
		&= \eps \langle T,\varphi \rangle  + \langle T,(u+\eps\varphi)^- \rangle,
	\end{align*}
	where $(u+\eps\varphi)^+$ and $(u+\eps\varphi)^-$ stand for the positive and the negative part of $u+\eps\varphi$, respectively. Recalling the definition of $T$, $u>0$ a.e.\,in $\Omega$, and $\partial_s\H(x,|\nabla u|)\geq 0$ a.e.\,in $\Omega$, we have
	\begin{align*}
		\langle T,(u+\eps\varphi)^- \rangle
		&\leq m(\phi(\nabla u)) \langle  \L(u),(u+\eps\varphi)^- \rangle \\
		&= -m(\phi(\nabla u))  \int_{\{u+\eps\varphi\leq 0\}} \partial_s\H(x,|\nabla u|)\frac{\nabla u}{|\nabla u|}(\nabla u+\eps\nabla\varphi) \dx \\
		&\leq -\eps m(\phi(\nabla u))  \int_{\{u+\eps\varphi\leq 0\}} \partial_s\H(x,|\nabla u|)\frac{\nabla u}{|\nabla u|}\nabla\varphi \dx.
	\end{align*}
	Thus we get
	\begin{align*}
		0 \leq \langle T,\varphi \rangle - m(\phi(\nabla u)) \int_{\{u+\eps\varphi\leq 0\}} \partial_s\H(x,|\nabla u|)\frac{\nabla u}{|\nabla u|}\nabla\varphi \dx.
	\end{align*}
	Notice that $|\{u+\eps\varphi\leq 0\}|\to 0$ as $\eps\to 0$. Therefore, $\langle T,\varphi \rangle \geq 0 \quad \text{for all } \varphi\in W^{1,\H}_0(\Omega)$. Since $\varphi$ is arbitrarily chosen, we have $\langle T,\varphi \rangle = 0$ for all $\varphi\in W^{1,\H}_0(\Omega)$, which means that $u$ is a weak solution to \eqref{prob}.
\end{proof}

Now we can give the proof of our main result.

\begin{proof}[Proof of Theorem \ref{mainthm}]
	Owing to Propositions \ref{Ne+min} and \ref{Ne-min}, we can find functions $u,v\in W^{1,\H}_0(\Omega)$ such that
	\begin{align*}
		J(u) = \min_{\Ne^+} J \quad\text{and}\quad J(v) = \min_{\Ne^-} J.
	\end{align*}
	By virtue of Lemma \ref{locmin} (see also Remark \ref{saddlepoint}), Lemma \ref{fundineq} is applicable to $u$. Thus, Proposition \ref{weaksol} ensures that $u$ is a weak solution of problem \eqref{prob}. On the other hand, Lemma \ref{fundineq-} and Proposition \ref{weaksol} guarantee that $v$ is a weak solution to \eqref{prob}. The conclusion follows by Lemma \ref{energysign}, since $u\in\Ne^+$ and $v\in\Ne^-$ imply
	\begin{align*}
		J(u)<0<J(v).
	\end{align*}
\end{proof}
%

\section*{Acknowledgments}

The first author is member of the {\em Gruppo Nazionale per l'Analisi Matematica, la Probabilit\`a e le loro Applicazioni}
(GNAMPA) of the {\em Istituto Nazionale di Alta Matematica} (INdAM).

This study was partly funded by: (i) Research project of MIUR (Italian Ministry of Education, University and Research) Prin 2022 {\it Nonlinear differential problems with applications to real phenomena} (Grant No. 2022ZXZTN2); (ii) INdAM-GNAMPA Project 2023 titled {\em Problemi ellittici e parabolici con termini di reazione singolari e convettivi} (E53C22001930001).

The first author thanks the University of Technology Berlin for the kind hospitality during a research stay in April 2024.


\end{document}